\newtheorem{thm}{Theorem}[section]
\newtheorem{cor}[thm]{Corollary}
\newtheorem{lem}[thm]{Lemma}
\newtheorem{conj}[thm]{Conjecture}
\newtheorem{prop}[thm]{Proposition}
\newtheorem{defn}[thm]{Definition}
\newtheorem{rem}[thm]{Remark}
\DeclareMathOperator{\Spec}{Spec}
\DeclareMathOperator{\Gal}{Gal}
\DeclareMathOperator{\id}{id}
\DeclareMathOperator{\Hom}{Hom}
\DeclareMathOperator{\Rep}{Rep}
\DeclareMathOperator{\Ker}{Ker}
\DeclareMathOperator{\Aut}{Aut} 
\DeclareMathOperator{\Ind}{Ind}
\DeclareMathOperator{\Irr}{Irr}
\DeclareMathOperator{\Image}{Im}
\DeclareMathOperator{\Lie}{Lie}
\DeclareMathOperator{\ad}{ad}
\DeclareMathOperator{\Res}{Res}
\DeclareMathOperator{\Ad}{Ad}
\DeclareMathOperator{\Sh}{Sh} 
\DeclareMathOperator{\Coh}{Coh}
\DeclareMathOperator{\Fun}{Fun}
\DeclareMathOperator{\Ch}{Ch}
\DeclareMathOperator{\cInd}{c-Ind}
\DeclareMathOperator{\SL}{SL}
\DeclareMathOperator{\GL}{GL}
\DeclareMathOperator{\PGL}{PGL}
\newcommand{\cf}{\textit{cf.\ }}
\newcommand{\mb}{\mathbb}
\newcommand{\mc}{\mathcal}
\newcommand{\mr}{\mathrm}
\newcommand{\mf}{\mathfrak}
\newcommand{\mbf}{\mathbf}
\newcommand{\ms}{\mathsf}
\newcommand{\msc}{\mathscr}
\newcommand{\ol}{\overline}
\newcommand{\ul}{\underline}
\newcommand{\wh}{\widehat}
\newcommand{\wt}{\widetilde}
\newcommand{\Gm}{\mathbb{G}_{\mathrm{m}}}
\newcommand{\Ga}{\mathbb{G}_{\mathrm{a}}}
\newcommand{\ab}{\mathrm{ab}}
\newcommand{\sep}{\mathrm{sep}}
\begin{document}
\title{Finite Langlands correspondence}
\author{Naoki Imai} 

\date{}

\maketitle

\begin{abstract}
We construct the Langlands correspondence for connected reductive groups over finite fields, which we call the finite Langlands correspondence. We discuss also its relation with the categorical local Langlands correspondence. 
\end{abstract}

\section{Introduction}\label{sec:intro}

In \cite{IVLpfin}, we formulate 
the Langlands correspondence for connected reductive groups over finite fields, which we call the finite Langlands correspondence, introducing Langlands parameters for connected reductive groups over finite fields. 
In the $\GL_n$-case, this correspondence is essentially due to Macdonald \cite{MacZetafingen}. See also \cite{AubMaccor} for a more relevant description. 
In \cite{ColRedffSL}, the finite Langlands correspondence is constructed for $\SL_n$, and its relation to the local Langlands correspondence is studied in details. 
In this paper we construct the finite Langlands correspondence.

Let $G$ be a reductive group over a finite field $k$. 
Let $\ell$ be a prime different from the characteristic of $k$. We write $\Rep (G(k))$ for the category of finite dimensional representations of $G(k)$ over $\ol{\mb{Q}}_{\ell}$.  
Actually we first show the following theorem on a categorical equivalence. 
\begin{thm}[Theorem \ref{thm:RepGcateq}]
	We fix a Whittaker datum of $G^{\circ}$. 
	Then there is a natural equivalence 
	\[
	\Rep (G(k)) \cong \bigoplus_{\mf{o}} \bigoplus_{\mf{c}}
	\bigoplus_{\beta \in \mf{B}_{\mbf{c},\Omega_{\mbf{c}}}^{\circ}} \Sh^{\mc{G}_{\mbf{c}},\tau_{\beta}}(\mc{G}_{\mbf{c}})^{\Omega_{\mbf{c},\beta}}
	\]
	of abelian categories, where $\mf{o}$ runs thorough the semisimple parameters of $G$, $\mf{c}$ runs thorough the unipotent parameters of $G$ with respect to $\mf{o}$. 
Here $\mf{B}_{\mbf{c},\Omega_{\mbf{c}}}^{\circ}$ and $\mc{G}_{\mbf{c}}$ are a finite set and a finite group determined from $\mf{c}$, 
and $\Sh^{\mc{G}_{\mbf{c}},\tau_{\beta}}(\mc{G}_{\mbf{c}})^{\Omega_{\mbf{c},\beta}}$ is a category of $\mc{G}_{\mbf{c}}$-equivarinat sheaves on $\mc{G}_{\mbf{c}}$ with equivariant structure with respect to an action of a finite group $\Omega_{\mbf{c},\beta}$. 
	\end{thm}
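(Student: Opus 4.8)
The plan is to produce the decomposition on the right-hand side in three stages --- first by semisimple parameters, then, within each, by unipotent parameters via Jordan decomposition, and finally by the action of the component group of $G$ --- and to identify the innermost pieces with categories of equivariant sheaves. Throughout one uses that, the coefficient field being $\ol{\mb{Q}}_{\ell}$ of characteristic zero, all the abelian categories in sight are semisimple, so an equivalence between two of them is the same datum as a bijection between their sets of isomorphism classes of simple objects; the force of the statement is that the resulting bijection for $\Rep(G(k))$ is \emph{canonical}, given the Whittaker datum.

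\emph{Decomposition by semisimple parameters.} First I would establish the coarse block decomposition
\[
\Rep(G(k)) \cong \bigoplus_{\mf{o}} \Rep_{\mf{o}}(G(k)),
\]
where $\Rep_{\mf{o}}(G(k))$ is spanned by the irreducibles whose semisimple parameter in the sense of \cite{IVLpfin} is $\mf{o}$. For connected $G$ this is the partition of $\Irr(G(k))$ into rational Lusztig series, which rests on the disjointness of these series together with the orthogonality of Deligne--Lusztig characters; for general $G$ one restricts to $G^{\circ}(k)$, applies the connected case, and checks that the finite component group $\Omega := G(k)/G^{\circ}(k)$ permutes the $G^{\circ}(k)$-series in a way compatible with the intrinsic labelling, so that the decomposition descends after gathering $\Omega$-orbits.

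\emph{Jordan decomposition and Lusztig's families.} For each $\mf{o}$ I would then apply Lusztig's Jordan decomposition of characters, read --- the categories being semisimple --- as an equivalence of $\Rep_{\mf{o}}(G^{\circ}(k))$ with a sum of unipotent blocks of the relevant, possibly twisted, reductive group attached to $\mf{o}$; the unipotent parameters $\mf{c}$ relative to $\mf{o}$ index precisely these unipotent blocks. Then I would invoke Lusztig's classification of unipotent representations: each unipotent block is a disjoint union of families, and each family is governed by a finite group $\mc{G}_{\mbf{c}}$, its irreducible members being in bijection --- pinned down, once the fixed Whittaker datum of $G^{\circ}$ is used to fix the base point of the family and remove the sign ambiguity, as in the non-abelian Fourier transform picture --- with the set of $\mc{G}_{\mbf{c}}$-conjugacy classes of pairs $(x,\sigma)$, $x \in \mc{G}_{\mbf{c}}$ and $\sigma$ an irreducible representation of the centralizer of $x$. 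That set is exactly the set of isomorphism classes of simple objects of the semisimple category $\Sh^{\mc{G}_{\mbf{c}}}(\mc{G}_{\mbf{c}})$ of $\mc{G}_{\mbf{c}}$-equivariant $\ol{\mb{Q}}_{\ell}$-sheaves on $\mc{G}_{\mbf{c}}$ for the conjugation action, so, for connected $G$, this gives an equivalence $\Rep_{\mf{o}}(G^{\circ}(k)) \cong \bigoplus_{\mf{c}} \Sh^{\mc{G}_{\mbf{c}}}(\mc{G}_{\mbf{c}})$.

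\emph{Descent to $G(k)$, and the main obstacle.} To pass from $G^{\circ}(k)$ to $G(k)$ I would run Clifford theory along $G^{\circ}(k) \trianglelefteq G(k)$. The component group $\Omega$ acts on the pairs $(\mf{o},\mf{c})$ and, for a fixed one, on $\Sh^{\mc{G}_{\mbf{c}}}(\mc{G}_{\mbf{c}})$; writing $\Omega_{\mbf{c}}$ for the stabilizer of this block, the set $\mf{B}_{\mbf{c},\Omega_{\mbf{c}}}^{\circ}$ parametrizes the $\Omega_{\mbf{c}}$-orbits of simple objects, and each orbit $\beta$ carries an inertia subgroup $\Omega_{\mbf{c},\beta}$ together with a Schur-multiplier obstruction represented by the twist $\tau_{\beta}$. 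Ordinary Clifford theory then identifies the block of $\Rep(G(k))$ lying over $\beta$ with the category $\Sh^{\mc{G}_{\mbf{c}},\tau_{\beta}}(\mc{G}_{\mbf{c}})^{\Omega_{\mbf{c},\beta}}$ of $\tau_{\beta}$-twisted $\Omega_{\mbf{c},\beta}$-equivariant objects, and reassembling over $\mf{o}$, over $\mf{c}$, and over $\beta$ yields the asserted equivalence. The principal difficulty is coherence across the three stages: one must show that the action of $\Omega$ on unipotent families and the associated $2$-cocycles produced on the group side match the combinatorics of $\mc{G}_{\mbf{c}}$ and the twists $\tau_{\beta}$ on the sheaf side, and --- crucially for canonicity --- that the Whittaker normalization is transported coherently through the Jordan decomposition, so that the final bijection of irreducibles is intrinsic rather than defined only up to scalars and reindexing. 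Tracking these normalizations and cocycles, rather than any single reduction, is where the work lies.
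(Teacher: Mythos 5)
There is a genuine gap. Your strategy is to turn set-level bijections into equivalences of categories by invoking semisimplicity, and then to run Clifford theory along $G^{\circ}(k) \trianglelefteq G(k)$. The problem, which you yourself flag at the end without resolving, is that this last step needs more than a bijection of simple objects: to decompose $\Rep(G(k))$ as $\Omega$-equivariant objects on the sheaf side, the equivalence $\Rep_{\mf{o}^{\circ}}(G^{\circ}(k)) \cong \bigoplus_{\mf{c}} \ldots$ must be transported \emph{as an $\Omega$-equivariant functor}, together with coherent natural isomorphisms. Semisimplicity tells you that a bijection of isoclasses extends to an equivalence, but it does not tell you that the Jordan-decomposition bijection is compatible with the $\pi_0(G)$-action, nor does it pin down the $2$-cocycle that governs the descent. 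The paper's introduction explicitly points out that this is why a character-level Jordan decomposition does not suffice, and the paper's proof avoids the issue by never working with bijections of simples at all: it uses the \emph{categorical} endoscopy equivalence of Lusztig--Yun (\cite[12.7 Corollary]{LuYuEndHecchv3}, rigidified by the Whittaker datum) for the Jordan decomposition step, Solleveld's results \cite{SolEnddisfin} for the passage to $\pi_0(G)$-fixed objects, and then Proposition \ref{prop:unipcatdec} — the equivalence $\Rep_{\mr{u}}^{\mbf{c}}(H^{\mr{F}}) \cong \Sh^{\mc{G}_{\mbf{c}},\sigma_{\mbf{c}}}(\mc{G}_{\mbf{c}})$ built from Lusztig's categorical-center description of unipotent representations — as the key input. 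Your sketch omits any analogue of Proposition \ref{prop:unipcatdec}, which is where the actual categorical content lies, and so leaves unproved precisely the coherence it identifies as the difficulty.

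A smaller but real error: you describe $\tau_{\beta}$ as ``a Schur-multiplier obstruction,'' i.e.\ a $2$-cocycle. In the paper $\tau_{\beta}$ is not a cocycle at all; it is an \emph{automorphism} of $\mc{G}_{\mbf{c}}$ induced by the Frobenius structure (of the form $\Ad(h_{\beta})\circ\sigma_{\beta}$), and $\Sh^{\mc{G}_{\mbf{c}},\tau_{\beta}}(\mc{G}_{\mbf{c}})$ means $\mc{G}_{\mbf{c}}$-equivariant sheaves for the $\tau_{\beta}$-\emph{twisted conjugation} action. The cohomological twist $\omega \in H^3$ does appear in the paper, but only inside the proof of Proposition \ref{prop:unipcatdec} (for the exceptional cells), and it is an associator twist, not a Schur multiplier on $\Omega_{\mbf{c},\beta}$.
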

Because of the importance of disconnected groups in Langlands program (\cf \cite{KalLLCdis}), we include disconnected groups in Theorem \ref{thm:RepGcateq} as well. 
A key input is Proposition \ref{prop:unipcatdec}, where we describe the category of unipotent representations of a connected reductive group. 
Then we construct the finite Langlands correspondence in 
Theorem \ref{thm:LCfin} using Theorem \ref{thm:RepGcateq}. 
The categorical equivalence in Proposition \ref{prop:unipcatdec} is crucial even if one is finally interested only in the isomorphism classes (for example, the finite Langlands correspondence) by the following reason: 
A parametrizatin of irreducible representations of $G(k)$ is reduced to that of unipotent representations of a disconnected group involving an endoscopic group. 
To extend the parametrization of unipotent representations from a connected group to a disconnected group canonically, we need functoriality of the categorial equivalence in Proposition \ref{prop:unipcatdec}, which we can not see from a bijection between sets of isomorphism classes. 

Using Theorem \ref{thm:RepGcateq}, we also deduce a parametrizatin of irreducible representations of $G(k)$ in the style of \cite{LusChred} in Theorem \ref{thm:parposdual}. In \cite[(13.2.1)]{LusChred}, the center of $G$ is assumed to be connected. See \cite{LusRepdis}, \cite{DiMiLuspar} and \cite{LusIrrSpin} for the general case.  
Our result is new if the center of $G$ is disconnected in the sense that the parametrization is uniquely rigidified. This is possible thanks to functoriality of the categorial equivalence in Proposition \ref{prop:unipcatdec} as explained above. 

In Section \ref{sec:RelcatLL}, we formulate a conjecture relating the finite Langlands correspondence with the categorical local Langlands correspondence. 

\subsection*{Acknowledgements}
The author would like to thank David Vogan for various discussions and helpful comments on the topic of this paper. 
He also thank Teruhisa Koshikawa and Xinwen Zhu for helpful comments. 
\section{Whittaker datum}

We recall a definition of Whittaker datum. 
Let $p$ be a prime number. 
In this section, 
$F$ is a non-archimedean local field of residue characteristic $p$ or a finite field of characteristic $p$. 
Let $G$ be a connected quasi-split reductive group over $F$. 
Let $\Lambda$ be a ring. 
%We put 
%\[\mb{Z}[\mu_{p^{\infty}}]=\bigcup_{m \geq 0} \mb{Z}[\mu_{p^{m}}], \]
%where $\mb{Z}[\mu_{p^{m}}]$ is the subring of $\ol{\mb{Q}}$ generated by all the $p^{m}$-th roots of unity. 

\begin{defn}
Let $B$ be a Borel subgroup of $G$. 
A smooth character $\psi \colon R_{\mr{u}}(B)(F) \to \mb{Z}[1/p]/\mb{Z}$ is called generic if it factors through $R_{\mr{u}}(B)^{\ab}(F)$ and the stabilizer of $\psi$ in $G(F)$ is $Z(G)(F)R_{\mr{u}}(B)(F)$. 

If $\psi \colon R_{\mr{u}}(B)(F) \to \mb{Z}[1/p]/\mb{Z}$ is a generic character and there is an injective homomorphism $\Image (\psi) \hookrightarrow \Lambda^{\times}$, then the obtained homomorphism  $R_{\mr{u}}(B)(F) \to \Lambda^{\times}$ is also called a generic character. 
\end{defn}

\begin{rem}
If $F$ is non-archimedean local field, any smooth character $R_{\mr{u}}(B)(F) \to \mb{Z}[1/p]/\mb{Z}$ factors through $R_{\mr{u}}(B)^{\ab}(F)$ by \cite[4.1 Theorem]{BuHeOnderunip}. 
This may not be true in general when $\lvert F \rvert \leq 3$ (\cf \cite[Lemma 7]{HowdegSt}). 
\end{rem}

\begin{defn}
	A Whittaker datum of $G$ is a $G(F)$-conjugacy class of $(B,\psi)$, where 
	$B$ is a Borel subgroup of $G$, and $\psi$ is a generic character of $R_{\mr{u}}(B)(F)$. 
\end{defn}

We recall the following well-known fact (\cf \cite[\S 3]{DeRegencusp}):

\begin{prop}\label{prop:whitor}
	The set of Whittaker data of $G$ forms a $G^{\ad}(F)/G(F)$-torsor. 
\end{prop}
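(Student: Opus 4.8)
The plan is to transport the whole question to the adjoint group $G^{\ad}$ and to reduce the torsor statement to the self-duality of the additive group of a local or finite field under multiplication by its units. Write $\mc{W}$ for the set of pairs $(B,\psi)$, where $B$ is a Borel subgroup of $G$ over $F$ and $\psi$ is a generic character of $R_{\mr{u}}(B)(F)$, and let $\ol{G(F)}$ denote the image of $G(F)$ in $G^{\ad}(F)$, so that the set of Whittaker data of $G$ is $\ol{G(F)}\backslash\mc{W}$. Since the conjugation action of $G$ on itself factors through $G^{\ad}$ and is defined over $F$, the group $G^{\ad}(F)$ acts on $\mc{W}$, restricting on $\ol{G(F)}$ to the action used to form Whittaker data, and $\ol{G(F)}$ is normal in $G^{\ad}(F)$ because conjugation by an element of $G^{\ad}(F)$ is an $F$-automorphism of $G$ and hence preserves $G(F)$. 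Thus it is enough to prove: (i) $G^{\ad}(F)$ acts transitively on $\mc{W}$; and (ii) the stabilizer in $G^{\ad}(F)$ of a point of $\mc{W}$ is contained in $\ol{G(F)}$. Granting these, $\mc{W}\cong G^{\ad}(F)/S$ with $S\subseteq\ol{G(F)}$, so $\ol{G(F)}\backslash\mc{W}\cong\ol{G(F)}\backslash G^{\ad}(F)$ is a torsor under $G^{\ad}(F)/\ol{G(F)}$.

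To prove (i), fix a Borel $B$ of $G$ over $F$, with image $B^{\ad}$ in $G^{\ad}$ and maximal torus $T^{\ad}\subseteq B^{\ad}$. First, $G^{\ad}(F)$ acts transitively on the set of Borels of $G$ over $F$, which is $(G^{\ad}/B^{\ad})(F)$: the map $G^{\ad}(F)\to(G^{\ad}/B^{\ad})(F)$ is surjective because the next term $H^1(F,B^{\ad})\cong H^1(F,T^{\ad})$ in the exact sequence of pointed sets vanishes — the kernel $R_{\mr{u}}(B^{\ad})$ of $B^{\ad}\to T^{\ad}$ is split unipotent, and $T^{\ad}$ is a product of Weil restrictions $\Res_{F_O/F}\Gm$ since, $G$ being quasi-split, $X^*(T^{\ad})$ is the root lattice on which the Galois group merely permutes the simple roots, so Hilbert 90 applies. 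Translating one pair by $G^{\ad}(F)$, we may thus assume both given pairs involve this $B$; it then remains to check that $B^{\ad}(F)=T^{\ad}(F)\ltimes R_{\mr{u}}(B)(F)$ acts transitively on the generic characters of $R_{\mr{u}}(B)(F)$. The normal factor $R_{\mr{u}}(B)(F)$ acts trivially, as conjugation is trivial on $R_{\mr{u}}(B)^{\ab}$ and $\psi$ factors through $R_{\mr{u}}(B)^{\ab}(F)$, so only $T^{\ad}(F)$ matters. Choosing an $F$-rational pinning of $G$ (available because $G$ is quasi-split) and grouping the simple roots into Galois orbits $O$ with corresponding finite extensions $F_O/F$, one obtains compatible $F$-rational isomorphisms $T^{\ad}\cong\prod_O\Res_{F_O/F}\Gm$ and $R_{\mr{u}}(B)^{\ab}\cong\prod_O\Res_{F_O/F}\Ga$ under which $T^{\ad}$ acts componentwise by scalar multiplication, and, with respect to the resulting decomposition $R_{\mr{u}}(B)^{\ab}(F)=\prod_O F_O$, a character is generic exactly when each of its components is nontrivial. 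Since for a local or finite field $F_O$ the nontrivial smooth characters $F_O\to\mb{Z}[1/p]/\mb{Z}$ form a torsor under $F_O^\times$ via multiplication — the self-duality of $F_O$ as an additive group, realized through the trace form — it follows that $T^{\ad}(F)=\prod_O F_O^\times$ acts simply transitively on the generic characters; in particular (i) holds.

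The simple transitivity just established also yields (ii). If $g\in G^{\ad}(F)$ fixes a pair $(B,\psi)\in\mc{W}$, then $g$ normalizes $B$ and hence lies in $B^{\ad}(F)$; writing $g=tu$ with $t\in T^{\ad}(F)$ and $u\in R_{\mr{u}}(B)(F)$, the element $u$ fixes $\psi$ automatically, so $t$ fixes the generic character $\psi$, and therefore $t=1$ by freeness. Thus the stabilizer of $(B,\psi)$ in $G^{\ad}(F)$ is $R_{\mr{u}}(B)(F)$, which is contained in $\ol{G(F)}$ because the isogeny $G\to G^{\ad}$ restricts to an isomorphism $R_{\mr{u}}(B)\xrightarrow{\sim}R_{\mr{u}}(B^{\ad})$.

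I expect the real work to be organizational rather than conceptual: producing the $F$-rational pinning and checking that it induces the asserted equivariant identifications $T^{\ad}\cong\prod_O\Res_{F_O/F}\Gm$ and $R_{\mr{u}}(B)^{\ab}\cong\prod_O\Res_{F_O/F}\Ga$ — in particular handling orbits of type $A_2$, where two simple roots of one orbit are non-orthogonal — and, over fields with $\lvert F\rvert\leq3$, reconciling the definition of ``generic'' given above with the condition ``nontrivial on each component'', which is precisely where the phenomenon flagged in the remark following the definition can enter; there one argues as in the reference cited just before the statement. The remaining steps are formal.
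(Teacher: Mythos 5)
Your proof uses the same core computation as the paper — the decompositions $R_{\mr{u}}(B)^{\ab}\cong\prod_O\Res_{F_O/F}\Ga$ and $T^{\ad}\cong\prod_O\Res_{F_O/F}\Gm$, together with the fact that $F_O^\times$ acts (simply) transitively on nontrivial smooth characters of $F_O$ — but it is more careful about what the torsor statement actually requires. The paper reduces to ``show $G^{\ad}(F)$ acts transitively on Whittaker data'' and then verifies only transitivity of $T(F)$ on generic characters, leaving two steps implicit: that $F$-rational Borels are $G^{\ad}(F)$-conjugate (your Hilbert 90 step), and, more substantively, that the action of $G^{\ad}(F)/\ol{G(F)}$ on Whittaker data is \emph{free}, which a torsor needs beyond transitivity. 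Your item (ii) supplies the freeness: simple transitivity of $T^{\ad}(F)$ on generic characters forces the stabilizer of $(B,\psi)$ in $G^{\ad}(F)$ to be $R_{\mr{u}}(B)(F)$, which lies in $\ol{G(F)}$ since $G\to G^{\ad}$ is an isomorphism on unipotent radicals. These additions are correct and make the argument genuinely self-contained. The one caveat you flag yourself — matching the paper's stabilizer-based definition of ``generic'' with ``nontrivial on each factor $F_O$'' — is handled by the same decomposition of $R_{\mr{u}}(B)^{\ab}(F)$ and is exactly how the paper itself passes from the definition to componentwise nontriviality mid-proof; your deferral to ``argue as in the cited reference'' is acceptable but could be replaced by the one-line observation that the stabilizer condition is equivalent, after splitting off $Z(G)(F)R_{\mr{u}}(B)(F)$, to $T^{\ad}(F)$ acting with trivial stabilizer on $\psi$, i.e., to each component being nontrivial.
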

\begin{proof}
It suffices to show that the conjugate action of $G^{\ad}(F)$ is transitive on the set of Whittaker data of $G$. Therefore we may assume that $G$ is adjoint. 

Let $B$ be a Borel subgroup. We take a maxmal torus $T$ of $G$ contained in $B$. It suffices to show that the generic characters of $R_{\mr{u}}(B)(F)$ are conjugate under the action of $T(F)$. 
Let $\Delta$ be the set of simple roots of $G_{F^{\sep}}$ with respect to $B_{F^{\sep}}$ and $T_{F^{\sep}}$. 
We take a representative $\alpha_1, \ldots , \alpha_m \in \Delta$ of $\Gamma_F \backslash \Delta$. 
For $1 \leq i \leq m$, let $F_i$ be the finite separable extension of $F$ such that $\Gamma_{F_i}$ is the stabilizer of $\alpha_i$ in $\Gamma_F$. Then we have an isomorphism 
\begin{equation}\label{eq:RuBabdec}
R_{\mr{u}}(B)^{\ab} \cong \prod_{1 \leq i \leq m} \Res_{F_i/F} \Ga. 
\end{equation}
If $\psi$ is a generaic character, then it induces a character of $\prod_{1 \leq i \leq m} \Ga (F_i)$ via \eqref{eq:RuBabdec}, and its restriction to each factor $\Ga (F_i)$ is non-trivial by the definition of a generic character. We note that the action of $\Gm (F_i)$ on $\Ga (F_i)$ induces a transitive action of $\Gm (F_i)$ on the set of non-trivial characters $\Ga (F_i) \to \mb{Z}[1/p]/\mb{Z}$ (\cf \cite[1.7 Proposition]{BHLLCGL2}). Therefore it suffices to show that the morphism 
$T \to \prod_{1 \leq i \leq m} \Res_{F_i/F} \Gm$ 
induced by conjugate action via \eqref{eq:RuBabdec} is an isomorphism. It is enough to show this after the base change to $F^{\sep}$, where it follows from the adjointness of $G$.  
\end{proof}

\begin{cor}\label{cor:whuni}
If $F$ is a finite field and $Z(G)$ is connected, the Whittaker datum of $G$ is unique. 
\end{cor}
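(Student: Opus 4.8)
By Proposition \ref{prop:whitor}, the set of Whittaker data of $G$ is a torsor under $G^{\ad}(F)/G(F)$, so it suffices to prove that this group is trivial, i.e.\ that the natural map $G(F) \to G^{\ad}(F)$ is surjective. The plan is to read off this surjectivity from the long exact sequence of Galois cohomology attached to the central isogeny
\[
1 \to Z(G) \to G \to G^{\ad} \to 1.
\]

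Taking $\Gamma_F$-cohomology yields an exact sequence
\[
G(F) \to G^{\ad}(F) \to H^1(F, Z(G)) \to H^1(F, G),
\]
so the cokernel $G^{\ad}(F)/G(F)$ embeds into $H^1(F, Z(G))$. The key input is then Lang's theorem: since $F$ is a finite field and $Z(G)$ is a connected algebraic group over $F$, we have $H^1(F, Z(G)) = 1$. (Here one uses the hypothesis that $Z(G)$ is connected in an essential way; for disconnected $Z(G)$ this $H^1$ need not vanish, which is precisely why the general case requires a genuine choice of Whittaker datum.) Hence $G^{\ad}(F)/G(F) = 1$, the torsor of Proposition \ref{prop:whitor} has a single element, and the Whittaker datum of $G$ is unique.

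I expect no real obstacle here: the statement is a formal consequence of Proposition \ref{prop:whitor} together with the vanishing of $H^1$ of a connected group over a finite field. The only point deserving a word of care is the identification of the quantity $G^{\ad}(F)/G(F)$ appearing in Proposition \ref{prop:whitor} with the cokernel of $G(F) \to G^{\ad}(F)$, and the verification that Lang's theorem indeed applies to $Z(G)$ as a (possibly non-smooth in small characteristic, but still connected) group scheme — in the setting at hand $Z(G)$ is a connected group of multiplicative type, for which the relevant vanishing is standard.
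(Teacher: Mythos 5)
Your proof is correct and is essentially the paper's proof, just spelled out: the paper also deduces uniqueness directly from Proposition~\ref{prop:whitor} together with the vanishing of $H^1(F,Z(G))$ by Lang's theorem. The one sentence of caution you add (about why $G^{\ad}(F)/G(F)$ embeds into $H^1(F,Z(G))$, and about smoothness of $Z(G)$) is implicit in the paper but not a different argument.
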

\begin{proof}
This follows from Proposition \ref{prop:whitor} because 
$H^1(F,Z(G))$ is trivial by Lang's theorem. 
\end{proof}

\section{Langlands parameters}\label{sec:WD} 

We recall the definition of Langlands parameters for reductive groups over finite fields from \cite{IVLpfin}. 

Let $k$ be a finite field with $q$ elements. 
Let $\ol{k}$ be an algebraic closure of $k$. 
%For a positive inter $m$, we write $k_m$ for the finite extension of $k$ of degree $m$ in $\ol{k}$. 
Let $\sigma_q \in \Gal(\ol{k}/k)$ be the $q$-th power  arithmetic Frobenius element. 
%For an algebraic variety $X$ over $k$,let $\mr{F} \colon X \to X$ be the $q$-th power geometric Frobenius morphism.

\begin{defn}\label{def:finiteweilZ}
	We put
	\begin{equation}\label{e:finiteweil}
		I_k = \varprojlim_{k'} k'^\times
	\end{equation}
	where $k'$ runs through the finite extensions of $k$ in $\ol{k}$ and the transition maps are the norm maps. 
	We define the Weil group of $k$ by
	\begin{equation}
		W_k = I_k \rtimes \langle \sigma_q \rangle
	\end{equation}
	where the conjugation by $\sigma_q$ acts on $I_k$ as $q$-th power.
\end{defn}

Next we recall a definition of a Weil--Deligne group for a finite field.  

\begin{defn}\label{def:finweilDel}
	The Weil--Deligne group of the finite
	field $k$ is the locally pro-algebraic group
	scheme
		\begin{equation}
			\mathit{WD}_k =\Ga \rtimes W_k
		\end{equation}
over $\mb{Q}$, where $(\sigma_q^n,w) \in W_k$ acts on $\Ga$ by the multiplication by $q^n$.
\end{defn}

Let $G$ be a connected reductive algebraic group over
$k$. Let $\Lambda$ be an algebra over $\mb{Q}$. 
We write ${}^L G$ for the L-group of $G$ over $\Lambda$. 

\begin{defn}
Assume that $\Lambda$ is a field of characteristic zero. 
An L-parameter of Weil--Deligne type is a morphism of 
		\begin{equation}
			\varphi \colon \mathit{WD}_k \rightarrow {}^L G
		\end{equation}
group schemes over $\Lambda$ which is compatible with the projections to $\Gal (\ol{k}/k)$. 

We say that the L-parameter $\varphi$ is special
		if $\varphi|_{\Ga (K)}(1)$ is a special unipotent element of
		$\wh{G}^{\varphi (I_k)}$.
		We say that $\varphi$ is Frobenius semisimple if
		$\varphi(\sigma_q)$ is semisimple in ${}^L G$.
\end{defn}

We put
\[
 A_{Z_{\wh{G}}(\varphi(I_k))^{\circ}}(\varphi(\Ga)) =
 \pi_0 (Z_{Z_{\wh{G}}(\varphi(I_k))^{\circ}}(\varphi(\Ga))/Z(Z_{\wh{G}}(\varphi(I_k))^{\circ})). 
\]
We define Lusztig's canonical quotient $\ol{A}_{Z_{\wh{G}}(\varphi(I_k))^{\circ}}(\varphi(\Ga))$
	of $A_{Z_{\wh{G}}(\varphi(I_k))^{\circ}}(\varphi(\Ga))$ as in \cite[13.1]{LusChred} using the isomorphism given by \cite[Lemma 4.2]{IVLpfin}. 
We put $\varphi_0=\varphi|_{\Ga \times I_k}$ and
\begin{equation}
	A(\varphi_0)=\pi_0 \left( Z_{\wh{G}}(\varphi_0)/Z(\wh{G}) \right).
\end{equation}
 Further we put
\begin{equation}
	\ol{A}(\varphi_0)=A(\varphi_0)/\Ker ( A_{Z_{\wh{G}}(\varphi(I_k))^{\circ}}(\varphi(\Ga)) \to \ol{A}_{Z_{\wh{G}}(\varphi(I_k))^{\circ}}(\varphi(\Ga))).
\end{equation}

We put
\begin{equation*}
	\widetilde{Z}(\varphi_0)=\{ (g,\sigma_q^m) \in {}^{L}G \mid
	\Ad ((g,\sigma_q^m))(\varphi_0 (x))=\varphi_0 (\Ad (\sigma_q^m)(x)) \ \textrm{for all $x \in \Ga \times I_k$}
	\}.
\end{equation*}
Further we put
\begin{equation}
	\widetilde{A}(\varphi_0)=\widetilde{Z}(\varphi_0) /
	\Ker (Z_{\wh{G}}(\varphi_0) \to \ol{A}(\varphi_0)).
\end{equation}

We have 
$\varphi (\sigma_q) \in \widetilde{Z}(\varphi_0)$. 
Let $\ol{\varphi (\sigma_q)}$ be the image of 
$\varphi (\sigma_q)$ under the natural projection 
\[
\widetilde{Z}(\varphi_0) \to 
\widetilde{A}(\varphi_0). 
\]
We say that two 
L-parameters $\varphi$ and $\varphi'$ of Weil--Deligne type are equivalent 
if the following condition is satisfied: 
there is $g \in \wh{G}$ such that 
$\Ad (g)(\varphi_0)=\varphi_0'$ and 
$\ol{\varphi (\sigma_q)}$ corresponds to 
$\ol{\varphi' (\sigma_q)}$ under the bijection 
\[
\widetilde{A}(\varphi_0) \cong 
\widetilde{A}(\varphi'_0) 
\]
induced by $\Ad (g)$, where 
$\varphi_0=\varphi|_{\Ga \times I_k}$ and 
$\varphi_0'=\varphi'|_{\Ga \times I_k}$. 
Let $\Phi_{\Lambda}(G)$ be the equivalence classes of 
Frobenius semisimple L-parameters over $\Lambda$ of $G$. 
We write $\Phi_{\Lambda}(G)_{\mr{sp}} \subset \Phi_{\Lambda}(G)$ for 
the equivalence classes of special ones. 

We put
\begin{equation}
	A_{\varphi} = Z_{\ol{A}(\varphi_0)} (\ol{\varphi (\sigma_q)}) . 
\end{equation} 
In the finite Langlands correspondence, the irreducible representation of 
$A_{\varphi}$ will parametrizes the L-packet of $\varphi$. 

%\begin{conj}\label{conj:FLC}
%There is a natural surjection 
%\begin{equation*}		\mc{L}_G \colon \Irr_{\ol{\mb{Q}}_{\ell}}(G(k)) \to 		\Phi_{\ol{\mb{Q}}_{\ell}}(G)_{\mr{sp}}. \end{equation*}
%If we fix a Whittaker datum of $G$, then there is a natural bijection between $\mc{L}_G^{-1}(\varphi)$ and $\Irr_{\ol{\mb{Q}}_{\ell}}(A_{\varphi})$ for $\varphi \in \Phi_{\ol{\mb{Q}}_{\ell}}(G)_{\mr{sp}}$. \end{conj}

\section{Categorical equivalence}

Let $H$ be a connected reductive algebraic group over $\ol{k}$ with Frobenius endomorphism $\mr{F}$ with respect to $k$. 
Let $\mbf{c}$ be a two-sided cell of the Weyl group of $H$ such that $\mr{F}(\mbf{c})=\mbf{c}$. 
Let $\ell$ be a prime number different from $p$.  
We write $\Rep_{\mr{u}}^{\mbf{c}}(H^{\mr{F}})$ 
for the category of representation of $H^{\mr{F}}$ over $\ol{\mb{Q}}_{\ell}$ which are finite direct sums of unipotent representations whose associated two-sided is $\mbf{c}$. 

Let $u_{\mbf{c}}$ be an unipotent element in the unipotent conjugacy class of $\wh{H}$ over $\ol{\mb{Q}}_{\ell}$ corresponding to $\mbf{c}$. 
Let $\sigma$ be the automorphism of $\wh{H}$ given by the Frobenius structure of $H$. 
We define $\mc{G}_{\mbf{c}}$ as the Lusztig's canonical quotient of $\pi_0 (Z_{\wh{H}}(u_{\mbf{c}})/Z(\wh{H}))$. 
This is isomorphic to the finite group attached to $\mbf{c}$ constructed in \cite[Chapter 4]{LusChred} by \cite[(13.1.3)]{LusChred} (\cf \cite{LusFamSpr}). 
We take $h_{\mbf{c}} \in \wh{H}$ such that 
$\Ad (h_{\mbf{c}})(\sigma (u_{\mbf{c}}))=u_{\mbf{c}}$. 
We define $\sigma_{\mbf{c}} \in \Aut (\mc{G}_{\mbf{c}})$ by 
$\Ad (h_{\mbf{c}}) \circ \sigma$.

For a finite group $\mc{G}$ with an automorphsim $\tau$, let $\Sh^{\mc{G},\tau}(\mc{G})$ be the category of 
$\mc{G}$-equivalent sheaves of finite dimensional $\ol{\mb{Q}}_{\ell}$-vector spaces over $\mc{G}$ with respect to the $\tau$-twisted conjugation action of $\mc{G}$ on $\mc{G}$. If $\tau$ is the identity, we simply write 
$\Sh^{\mc{G}}(\mc{G})$ for $\Sh^{\mc{G},\id}(\mc{G})$. 

\begin{prop}\label{prop:unipcatdec}
There is a canonical equivalence 
	\[
	\Rep_{\mr{u}}^{\mbf{c}}(H^{\mr{F}}) \cong 
	\Sh^{\mc{G}_{\mbf{c}},\sigma_{\mbf{c}}}(\mc{G}_{\mbf{c}})
	\]
	of abelian categories. 
\end{prop}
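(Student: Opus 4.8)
The plan is to identify $\Rep_{\mathrm{u}}^{\mathbf{c}}(H^{\mathrm{F}})$ with a category of representations naturally attached to the cell $\mathbf{c}$ via Lusztig's classification of unipotent representations, and then to match the latter with $\Sh^{\mathcal{G}_{\mathbf{c}},\sigma_{\mathbf{c}}}(\mathcal{G}_{\mathbf{c}})$ by the standard dictionary between equivariant sheaves on a finite group under twisted conjugation and the nonabelian Fourier transform. First I would recall that, by Lusztig's work (\cite[Chapter 4]{LusChred} and \cite{LusFamSpr}), the irreducible unipotent representations of $H^{\mathrm{F}}$ lying in the family attached to $\mathbf{c}$ are parametrized by the $\mathrm{F}$-stable part of $\mathcal{M}(\mathcal{G}_{\mathbf{c}})$, the set of pairs $(x,\rho)$ with $x \in \mathcal{G}_{\mathbf{c}}$ taken up to conjugacy and $\rho$ an irreducible representation of $Z_{\mathcal{G}_{\mathbf{c}}}(x)$; when $H$ is not split this becomes the $\sigma_{\mathbf{c}}$-twisted version. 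The key structural observation is that $\mathcal{M}(\mathcal{G}_{\mathbf{c}})$ is precisely the set of isomorphism classes of simple objects in $\Sh^{\mathcal{G}_{\mathbf{c}}}(\mathcal{G}_{\mathbf{c}})$ (a simple equivariant sheaf is supported on one conjugacy class, with fibre an irreducible representation of the stabilizer), and the twisted conjugation action accounts for the Frobenius twist by $\sigma_{\mathbf{c}}$.

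The main work is to promote this bijection on simple objects to an equivalence of abelian categories, with appropriate canonicity. The approach here is: (i) show both categories are semisimple abelian $\ol{\mathbb{Q}}_\ell$-linear categories — semisimplicity of $\Rep_{\mathrm{u}}^{\mathbf{c}}(H^{\mathrm{F}})$ follows since it is a direct factor (cut out by idempotents in the group algebra, or by the decomposition of $\Rep(H^{\mathrm{F}})$ into families) of the semisimple category $\Rep(H^{\mathrm{F}})$, and $\Sh^{\mathcal{G}_{\mathbf{c}},\sigma_{\mathbf{c}}}(\mathcal{G}_{\mathbf{c}})$ is semisimple because it is the category of modules over a finite-dimensional semisimple $\ol{\mathbb{Q}}_\ell$-algebra (a twisted category algebra of the groupoid); (ii) produce a functor realizing the matching of simple objects. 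For (ii), the natural construction is via the space of "Deligne--Lusztig / character-sheaf" functions: Lusztig's parametrization is implemented by an explicit non-abelian Fourier transform matrix relating the basis of unipotent characters in the family to the basis $\{(x,\rho)\}$; reinterpreting the latter basis as the character basis of $\Sh^{\mathcal{G}_{\mathbf{c}},\sigma_{\mathbf{c}}}(\mathcal{G}_{\mathbf{c}})$, and recalling that unipotent characters in a fixed family are themselves $\mathrm{F}$-traces on character sheaves, gives a canonical correspondence of $K$-groups which lifts to the categorical level once one checks functoriality in $\mathbf{c}$ and compatibility with the decomposition of $\Rep_{\mathrm{u}}(H^{\mathrm{F}})$.

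Concretely, I would build the equivalence as follows. Attach to $H^{\mathrm{F}}$ its full unipotent block $\Rep_{\mathrm{u}}(H^{\mathrm{F}})$, decompose it over two-sided cells, and for the cell $\mathbf{c}$ use the geometric model: unipotent representations in this family arise (by Lusztig's theorem, via parabolic induction and the classification through the Hecke algebra / Springer correspondence for $\wh{H}$) from $\mathcal{G}_{\mathbf{c}}$-equivariant local systems on the relevant unipotent class of $\wh{H}$, equivalently from $\mathcal{G}_{\mathbf{c}}$-equivariant sheaves on the point, twisted by the way $\mathrm{F}$ permutes them; unwinding the $\sigma$-twist coming from $h_{\mathbf{c}}$ and $\sigma$ converts the plain equivariant category into the $\sigma_{\mathbf{c}}$-twisted-conjugation one on $\mathcal{G}_{\mathbf{c}}$ itself (this last step is exactly the passage from "$\mathcal{G}$-equivariant sheaves on a point with a $\sigma$-structure" to "$\mathcal{G}$-equivariant sheaves on $\mathcal{G}$ for $\sigma$-twisted conjugation", a form of Shapiro/descent). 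Canonicity of the resulting equivalence is inherited from the canonicity in Lusztig's construction together with the fact that $\mathcal{G}_{\mathbf{c}}$ and $\sigma_{\mathbf{c}}$ are themselves canonically determined (up to the choice of $u_{\mathbf{c}}$ and $h_{\mathbf{c}}$, with the dependence being through inner automorphisms that act trivially on the category of equivariant sheaves). The hardest point, and the one I expect to require the most care, is exactly this last reduction: matching Lusztig's Fourier-transform normalization against the twisted-conjugation-equivariant sheaf category so that the equivalence is canonical (not merely an abstract equivalence of semisimple categories with the same number of simple objects), and checking that the $\sigma_{\mathbf{c}}$ appearing — built from $\Ad(h_{\mathbf{c}})\circ\sigma$ — is the correct twist on the nose, including on non-split and adjoint/simply-connected subtleties where $\mathcal{G}_{\mathbf{c}}$ can differ from $\pi_0(Z_{\wh H}(u_{\mathbf{c}})/Z(\wh H))$.
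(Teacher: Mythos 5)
Your approach diverges from the paper's, and it has a genuine gap at exactly the point you flag as "the hardest." You propose to first match the sets of simple objects via Lusztig's parametrization $\mathcal{M}(\mathcal{G}_{\mathbf{c}})$ and then upgrade that bijection to an equivalence using semisimplicity of both sides. But semisimplicity only yields an \emph{abstract} equivalence of two categories with bijective simples; it gives no preferred functor, so the word "canonical" in the statement would not be justified. Your suggestion that "the non-abelian Fourier transform on $K$-groups lifts to the categorical level once one checks functoriality in $\mathbf{c}$" is precisely the assertion that needs proof; the Fourier transform is a matrix between bases of a Grothendieck group, not a functor, and there is no general mechanism for lifting such a matrix to an equivalence of abelian categories. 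The closing sentence of your proposal acknowledges this, but the proposal never supplies the missing construction.

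The paper closes this gap by avoiding the simple-object count entirely. After reducing to $H$ adjoint simple, it invokes Lusztig's theorem (\cite[6.3~(a)]{LusUnicatcent}) identifying $\Rep_{\mathrm{u}}^{\mathbf{c}}(H^{\mathrm{F}})$ with the $\sigma$-twisted categorical center $\mathcal{Z}_{\sigma}(\mathcal{C}^{\mathbf{c}}\mathcal{B}^2)$ of the truncated Hecke bimodule category. It then identifies $\mathcal{C}^{\mathbf{c}}\mathcal{B}^2$ with a category of $\mathcal{G}_{\mathbf{c}}$-equivariant endofunctors $\Fun_{\mathcal{G}_{\mathbf{c}},\omega}(X_{\mathbf{c}},X_{\mathbf{c}})$ via Bezrukavnikov--Finkelberg--Ostrik and Ostrik (the class $\omega$ handles the exceptional cells, a subtlety you do not address), and computes the twisted Drinfeld center using Bruguières--Virelizier and Morita invariance of the center, landing on $\Sh^{\mathcal{G}_{\mathbf{c}},\sigma_{\mathbf{c}}}(\mathcal{G}_{\mathbf{c}})$. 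This construction produces an actual functor, hence canonicity; it does not pass through a character-level comparison at all. To repair your argument you would need, at minimum, a categorical upgrade of Lusztig's Fourier transform; the existing upgrade in the literature is essentially the route the paper takes, so there is no cheaper path via counting simples.
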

\begin{proof}
	By \cite[Proposition 11.3.8]{DiMiRepLie2nd} and the definition of $\mc{G}_{\mbf{c}}$, both sides do not change even if we replace $H$ by the adjoint quotient. 
	Hence we may assume that $H$ is adjoint simple. 
	By \cite[6.3 (a)]{LusUnicatcent}, we have an equivalence 
	\[
	\Rep_{\mr{u}}^{\mbf{c}}(H^{\mr{F}}) \cong \mc{Z}_{\sigma}(\mc{C}^{\mbf{c}}\mc{B}^2)
	\]
	of abelian categories, 
	where $\mc{C}^{\mbf{c}}\mc{B}^2$ is a monoidal abelian category defined in 
	\cite[1.6, 6.1]{LusUnicatcent}, 
	and $\mc{Z}_{\sigma}$ is the categorical center relative to $\sigma$ defined in \cite[\S 5.5]{BrViQuanHopf}. 
	
	For each left cell $\Gamma$ in $\mbf{c}$, we attach 
	a subgroup $\mc{H}_{\Gamma}$ of $\mc{G}_{\mbf{c}}$ as \cite[3.8 Proposition]{LusLeadHec}. We put 
	$X_{\mbf{c}}=\coprod_{\Gamma} \mc{G}_{\mbf{c}}/\mc{H}_{\Gamma}$, where $\Gamma$ run through the left cells in $\mbf{c}$. 
	
	Let 
	$\Coh^{\mc{G}_{\mbf{c}}} (X_{\mbf{c}})$ 
	be the category of $\mc{G}_{\mbf{c}}$-equivariant sheaves of finite dimensional $\ol{\mb{Q}}_{\ell}$-vector spaces over $X_{\mbf{c}}$. 
	Let $\Fun _{\mc{G}_{\mbf{c}}} (X_{\mbf{c}},X_{\mbf{c}})$ be the monoidal category of all $\ol{\mb{Q}}_{\ell}$-linear functors 
	$\Coh^{\mc{G}_{\mbf{c}}} (X_{\mbf{c}}) \to \Coh^{\mc{G}_{\mbf{c}}} (X_{\mbf{c}})$ (\cf \cite[5.1]{BeOsTenCellWII}). 
	We recall that $\mc{G}_{\mbf{c}} \cong \mb{Z}/2\mb{Z}$ if $\mbf{c}$ is exceptional (\cf \cite[1]{OstTenExcCellW}). 
	Let $\omega \in H^3 (\mc{G}_{\mbf{c}},\{\pm 1\})$ be the unique nontrivial cohomology class if $\mbf{c}$ is exceptional, and the trivial one otherwise. 
	Let 
	$\Fun_{\mc{G}_{\mbf{c}},\omega} (X_{\mbf{c}} , X_{\mbf{c}})$ be the tensor category obtained from $\Fun_{\mc{G}_{\mbf{c}}} (X_{\mbf{c}}, X_{\mbf{c}})$ 
	by twisting
	the associativity isomorphisms by $\omega$. 
	By \cite[Theorem 4]{BFOTenCellWIII}, \cite[5.1 Remark]{BeOsTenCellWII} and \cite[Remark 2.20]{OstTenExcCellW}, we have an equivalence 
	\begin{equation*}
		\mc{C}^{\mbf{c}}\mc{B}^2 \cong \Fun_{\mc{G}_{\mbf{c}},\omega} (X_{\mbf{c}}, X_{\mbf{c}}) 
	\end{equation*}
	of tensor categories (\cf Proof of \cite[Corollary 5.4]{BFOChDmod}). 
	
We take a minimum positive integer $n$ such that $\sigma_{\mbf{c}}^n$ acts trivially on $\mc{G}_{\mbf{c}}$. 
We note that $n=1$ when $\mbf{c}$ is exceptional. 
%We put $\mc{G}_{\mbf{c},F}=\mc{G}_{\mbf{c}} \rtimes F^{\mb{Z}/n\mb{Z}}$. 
We note that $\sigma_{\mbf{c}}$ is centralizable in the sense of \cite[\S 5.1]{BrViQuanHopf} by \cite[Proposition 5.3]{BrViQuanHopf} and \cite[Proposition 5.1.7]{KeLyNonsemiTQFT}. 
By \cite[Theorem 5.12, Theorem 6.5]{BrViQuanHopf}, 
$\mc{Z}_{\sigma_{\mbf{c}}} (\Fun_{\mc{G}_{\mbf{c}},\omega} (X_{\mbf{c}}, X_{\mbf{c}}))$ is equivalent to the degree $1$ part of 
$\mc{Z} (\Fun_{\mc{G}_{\mbf{c}} \rtimes \sigma_{\mbf{c}}^{\mb{Z}/n\mb{Z}},\omega} (X_{\mbf{c}}, X_{\mbf{c}}))$. 
By \cite[Example 7.12.19, Corollary 7.16.2]{EGNOTenCat}, 
$\mc{Z} (\Fun_{\mc{G}_{\mbf{c}} \rtimes \sigma_{\mbf{c}}^{\mb{Z}/n\mb{Z}},\omega} (X_{\mbf{c}}, X_{\mbf{c}}))$ is equivalent to 
$\mc{Z} (\mr{Vec}_{\mc{G}_{\mbf{c}} \rtimes \sigma_{\mbf{c}}^{\mb{Z}/n\mb{Z}},\omega})$, where $\mr{Vec}_{\mc{G}_{\mbf{c}} \rtimes \sigma_{\mbf{c}}^{\mb{Z}/n\mb{Z}},\omega}$ is the monoidal category of $(\mc{G}_{\mbf{c}} \rtimes \sigma_{\mbf{c}}^{\mb{Z}/n\mb{Z}})$-graded finite dimensional vector spaces over $\ol{\mb{Q}}_{\ell}$ with the associativity isomorphisms twisted by $\omega$. 
Further $\mc{Z} (\mr{Vec}_{\mc{G}_{\mbf{c}} \rtimes \sigma_{\mbf{c}}^{\mb{Z}/n\mb{Z}},\omega})$ is equivalent to 
$\Sh^{\mc{G}_{\mbf{c}} \rtimes \sigma_{\mbf{c}}^{\mb{Z}/n\mb{Z}}}(\mc{G}_{\mbf{c}} \rtimes F^{\mb{Z}/n\mb{Z}})$ as abelian categories. 
The degree $1$ part of this category is 
$\Sh^{\mc{G}_{\mbf{c}},\sigma_{\mbf{c}}}(\mc{G}_{\mbf{c}})$. 
\end{proof}

\begin{rem}\label{rem:choiceh}
If $h_{\mbf{c}}'$ is another choice of $h_{\mbf{c}}'$, 
we have other automorphicm $\sigma_{\mbf{c}}'$ of $\mc{G}_{\mbf{c}}$ and equivalence 
	\[
\Rep_{\mr{u}}^{\mbf{c}}(H^{\mr{F}}) \cong 
\Sh^{\mc{G}_{\mbf{c}},\sigma_{\mbf{c}}'}(\mc{G}_{\mbf{c}})
\]
as Proposition \ref{prop:unipcatdec}. 
Two equivalences are related by the translation under the image of $h_{\mbf{c}}' h_{\mbf{c}}^{-1}$ in $\mc{G}_{\mbf{c}}$. 
\end{rem}

In the rest of this section, let $G$ be a reductive group over $k$ such that each connected component of $G$ is defined over $k$. 

We take a Borel pair $(B,T)$ of $G$. Let $W$ and $W^{\circ}$ be the Weyl group of $G$ and $G^{\circ}$ with respect to $T$ respectively. 
Let $\Ch (T)$ be the abelian group of isomorphism classes of character sheaves on $T$. We have an isomorphsim 
\begin{equation}\label{eq:chTiso}
 \Ch (T) \cong \Hom (T(k),\ol{\mb{Q}}_{\ell}^{\times}) 
\end{equation}
by \cite[A.3.3 Theorem]{YunRigAutLoc}. 

In the following, when we use results in \cite{LuYuEndHecch}, we refer \cite{LuYuEndHecchv3}, which is a corrected version of \cite{LuYuEndHecch}. 

\begin{defn}\label{defn:ssunippara}
	\begin{enumerate}
		\item\label{en:sspar}
		A semisimple parameter of $G$ is a $W$-orbit $\mf{o} \subset \Ch (T)$ which contains an $F$-stable $W^{\circ}$-orbit.   
		\item\label{en:unippar}
		Let $\mf{o}$ be a semisimple parameter of $G$. 
		A unipotent parameter of $G$ with respect to $\mf{o}$ is a union $\mf{c} \subset W^{\circ} \times \mf{o}$ of a $W$-orbit of $F$-stable two-sided cells $\mf{c}^{\circ} \subset W \times \mf{o}^{\circ}$ in the sense of \cite[11.4]{LuYuEndHecchv3}, 
		where $\mf{o}^{\circ} \subset \mf{o}$ is an $F$-stable $W^{\circ}$-orbit. 
	\end{enumerate}
\end{defn}

For $\mc{L} \in \Ch (T)$, let $W_{\mc{L}}$ and $(W^{\circ})_{\mc{L}}$ be the stabilizer of $\mc{L}$ in $W$ and $W^{\circ}$ respectively. 
We put 
\[
 \Phi_{\mc{L}}=\{ \alpha \in \Phi(G,T) \mid  \textrm{$(\alpha^{\vee})^* \mc{L}$ is trivial}\}, \quad 
  \Phi_{\mc{L}}^+ =\Phi_{\mc{L}} \cap \Phi^+ (G,T). 
\]
Let $(W_{\mc{L}})^{\circ}$ be the Weyl group of $\Phi_{\mc{L}}$. We put 
\[
 \Omega_{\mc{L}}=W_{\mc{L}}/(W_{\mc{L}})^{\circ}, \quad 
 \Omega_{\mc{L}}^{\circ}=(W^{\circ})_{\mc{L}}/(W_{\mc{L}})^{\circ}. 
\]

\begin{lem}\label{lem:ccexist}
	Let $\mf{c}$ be a unipotent parameter of $G$ with respect to a semisimple parameter $\mf{o}$ of $G$. 
	Let $\mc{L}$ be a representative of $\mf{o}$. 
	Then there is a two-sided cell 
	$\mbf{c} \subset (W_{\mc{L}})^{\circ}$ such that $\mf{c} \cap ((W_{\mc{L}})^{\circ} \times \{ \mc{L} \})=\Omega_{\mc{L}}  \mbf{c} \times \{\mc{L}\}$. 
\end{lem}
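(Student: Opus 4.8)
The plan is to reduce the statement to the combinatorics of two-sided cells inside the parabolic-type subgroup $(W_{\mc{L}})^{\circ}$ and to track the action of the ``R-group'' quotient $\Omega_{\mc{L}}$. First I would recall that, by Definition \ref{defn:ssunippara}\eqref{en:unippar}, the unipotent parameter $\mf{c}$ is by construction a $W$-orbit of $F$-stable two-sided cells $\mf{c}^{\circ}\subset W\times\mf{o}^{\circ}$; fixing the representative $\mc{L}$ of $\mf{o}$ (with $\mc{L}$ chosen inside the $F$-stable $W^{\circ}$-orbit $\mf{o}^{\circ}$), the slice $\mf{c}^{\circ}\cap(W\times\{\mc{L}\})$ is naturally identified with a union of two-sided cells of the stabilizer $W_{\mc{L}}$ acting on itself. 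The key structural input is that $W_{\mc{L}}=(W_{\mc{L}})^{\circ}\rtimes\Omega_{\mc{L}}$, where $(W_{\mc{L}})^{\circ}$ is a genuine Weyl group (that of $\Phi_{\mc{L}}$) and $\Omega_{\mc{L}}$ acts on it by diagram automorphisms. Under such a semidirect product decomposition, the two-sided cells of $W_{\mc{L}}$ are exactly the $\Omega_{\mc{L}}$-orbits of two-sided cells of $(W_{\mc{L}})^{\circ}$ — this is a standard fact about cells in extended Weyl groups (as used in \cite{LuYuEndHecchv3}), which I would cite rather than reprove.

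Next I would carry out the descent in two steps. Step one: the $W$-orbit structure. Passing from the $W$-orbit $\mf{c}$ down to the $\mc{L}$-slice replaces ``$W$-orbit'' by ``$W_{\mc{L}}$-orbit'' in the standard way, using that $W_{\mc{L}}=\mathrm{Stab}_W(\mc{L})$ and the orbit–stabilizer bijection between $W$-orbits meeting $W\times\{\mc{L}\}$ and $W_{\mc{L}}$-orbits of cells. Concretely $\mf{c}\cap(W_{\mc{L}}\times\{\mc{L}\})$ is a single $W_{\mc{L}}$-orbit of two-sided cells of $W_{\mc{L}}$. Step two: the $\Omega_{\mc{L}}$-orbit structure. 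By the extended-Weyl-group fact above, that $W_{\mc{L}}$-orbit is, after intersecting with $(W_{\mc{L}})^{\circ}$, a union of two-sided cells of $(W_{\mc{L}})^{\circ}$ forming a single $\Omega_{\mc{L}}$-orbit; pick $\mbf{c}$ to be any one of them. It then remains to check the precise equality $\mf{c}\cap((W_{\mc{L}})^{\circ}\times\{\mc{L}\})=\Omega_{\mc{L}}\mbf{c}\times\{\mc{L}\}$, i.e. that the full $W_{\mc{L}}$-orbit restricted to $(W_{\mc{L}})^{\circ}$ is exactly the $\Omega_{\mc{L}}$-translates of $\mbf{c}$ and nothing more. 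This is where one must be a little careful: one needs that no element of $\mf{c}$ lies in $W_{\mc{L}}\setminus(W_{\mc{L}})^{\circ}$ over $\mc{L}$, equivalently that the cell datum defining $\mf{c}$ is ``supported'' in the identity component — this follows from the definition of $\mf{c}^{\circ}$ as cells of $W\times\mf{o}^{\circ}$ together with the fact that over the fixed representative $\mc{L}$ the relevant Hecke-algebra cells live in the parabolic-type piece $(W_{\mc{L}})^{\circ}$, per \cite[11.4]{LuYuEndHecchv3}.

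The main obstacle I anticipate is bookkeeping the three group actions ($W$, $\Omega_{\mc{L}}$, and the Frobenius $F$) simultaneously and making sure the $F$-stability hypotheses propagate correctly: $\mf{o}^{\circ}$ is $F$-stable, $\mc{L}$ should be chosen $F$-stable (or one works $F$-equivariantly), and the two-sided cells $\mf{c}^{\circ}$ are $F$-stable, so the chosen $\mbf{c}\subset(W_{\mc{L}})^{\circ}$ inherits an $F$-action compatible with $\sigma_{\mbf{c}}$ from the previous section. The cell-theoretic core — that cells of an extended Weyl group are $\Omega$-orbits of cells of the underlying Weyl group — is genuinely standard and should be invoked by reference; the real work is purely the orbit-combinatorics of the inclusion $(W_{\mc{L}})^{\circ}\hookrightarrow W_{\mc{L}}\hookrightarrow W$ and verifying the displayed equality on the nose, including the edge case where $\Omega_{\mc{L}}$ acts nontrivially on the set of two-sided cells so that $\Omega_{\mc{L}}\mbf{c}$ genuinely has more than one element.
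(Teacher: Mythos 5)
There is a real gap in the structural input you invoke. The paper's proof proceeds in two genuinely separate steps, and your proposal collapses them into one "standard fact" that doesn't exist in the form you state.

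The result \cite[11.4]{LuYuEndHecchv3} is about cells for the \emph{connected} group $G^{\circ}$, and what it gives you is the equality
\[
\mf{c}^{\circ} \cap ((W_{\mc{L}})^{\circ} \times \{ \mc{L} \})=\Omega_{\mc{L}}^{\circ} \,\mbf{c} \times \{\mc{L}\},
\]
with $\Omega_{\mc{L}}^{\circ}=(W^{\circ})_{\mc{L}}/(W_{\mc{L}})^{\circ}$, \emph{not} the full $\Omega_{\mc{L}}=W_{\mc{L}}/(W_{\mc{L}})^{\circ}$. Your proposal never distinguishes the intermediate stabilizer $(W^{\circ})_{\mc{L}}$ from $W_{\mc{L}}$, so it never isolates $\Omega_{\mc{L}}^{\circ}$ as the piece handed to you by Lusztig--Yun. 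The fact you want to cite instead --- "two-sided cells of the extended group $W_{\mc{L}}$ are $\Omega_{\mc{L}}$-orbits of two-sided cells of $(W_{\mc{L}})^{\circ}$" --- is not what \cite[11.4]{LuYuEndHecchv3} says, and it is not a standard combinatorial fact one can quote: the relevant two-sided cells live in the endoscopic/monodromic Hecke category attached to $(W^{\circ},\mf{o}^{\circ})$, not in $W_{\mc{L}}$ as an abstract extended Coxeter group. Also note the minor but symptomatic slip that $\mf{c}\subset W^{\circ}\times\mf{o}$, so the slice $\mf{c}\cap(W_{\mc{L}}\times\{\mc{L}\})$ you write is really a slice by $(W^{\circ})_{\mc{L}}=W_{\mc{L}}\cap W^{\circ}$; this is exactly the group your argument fails to keep track of.

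The passage from $\Omega_{\mc{L}}^{\circ}$ to $\Omega_{\mc{L}}$ is a separate step coming from the definition of $\mf{c}$ as a union of a $W$-orbit: one has $\mf{c}\cap(W^{\circ}\times\mf{o}^{\circ})=W_{\mf{o}^{\circ}}\mf{c}^{\circ}$, and then one uses the group-theoretic isomorphism $W_{\mc{L}}/(W^{\circ})_{\mc{L}}\cong W_{\mf{o}^{\circ}}/W^{\circ}$ (i.e., the component-group contribution of $G$) to replace $W_{\mf{o}^{\circ}}$ by $W_{\mc{L}}$ after slicing at $\mc{L}$, which enlarges $\Omega_{\mc{L}}^{\circ}$ to $\Omega_{\mc{L}}$. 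Your "orbit--stabilizer" step gestures at this, but without naming the isomorphism $W_{\mc{L}}/(W^{\circ})_{\mc{L}}\cong W_{\mf{o}^{\circ}}/W^{\circ}$ (or $\Omega_{\mc{L}}/\Omega_{\mc{L}}^{\circ}\cong\pi_0(G)_{\mf{o}^{\circ}}$) the argument has no mechanism to produce the extra index-$[\Omega_{\mc{L}}:\Omega_{\mc{L}}^{\circ}]$ worth of translates that the statement requires. In short: cite \cite[11.4]{LuYuEndHecchv3} for the $\Omega_{\mc{L}}^{\circ}$-statement on the connected group, then do the disconnected enlargement as a separate explicit computation, rather than invoking a single all-in-one extended-Weyl-group fact.
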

\begin{proof}
	We put 
	$\mf{o}^{\circ}=W^{\circ}\mc{L}$.  
	We take a two-sided cell $\mf{c}^{\circ} \subset W^{\circ} \times \mf{o}^{\circ}$ as 
	Definition \ref{defn:ssunippara} \ref{en:unippar}. 
	Further, we take a two-sided cell 
	$\mbf{c} \subset (W_{\mc{L}})^{\circ}$ such that $\mf{c}^{\circ} \cap ((W_{\mc{L}})^{\circ} \times \{ \mc{L} \})=\Omega_{\mc{L}}^{\circ} \mbf{c} \times \{\mc{L}\}$ as \cite[11.4]{LuYuEndHecchv3}. 
%Let $W_{\mf{o}^{\circ}}=\mr{Stab}_{W}(\mf{o}^{\circ})$. 
	Then we have 
	\begin{align*}
		\mf{c} \cap ((W_{\mc{L}})^{\circ} \times \{ \mc{L} \})&=(W_{\mf{o}^{\circ}} \mf{c}^{\circ}) \cap ((W_{\mc{L}})^{\circ} \times \{ \mc{L} \}) = (W_{\mc{L}} \mf{c}^{\circ}) \cap ((W_{\mc{L}})^{\circ} \times \{ \mc{L} \})\\ 
		&=W_{\mc{L}} (\mf{c}^{\circ} \cap ((W_{\mc{L}})^{\circ} \times \{ \mc{L} \}))=\Omega_{\mc{L}}  \mbf{c} \times \{\mc{L}\}
	\end{align*}
	by $\mf{c} \cap (W^{\circ} \times \mf{o}^{\circ})= W_{\mf{o}^{\circ}} \mf{c}^{\circ}$ and $W_{\mc{L}}/(W^{\circ})_{\mc{L}} \cong W_{\mf{o}^{\circ}}/W^{\circ}$. 
\end{proof}

We put 
\[
 {}_{\mc{L}}W^{\circ}_{F\mc{L}} =\{ w \in W^{\circ} \mid \mc{L}=wF\mc{L} \}, \quad 
 {}_{\mc{L}}\ul{W}^{\circ}_{F\mc{L}} =(W_{\mc{L}})^{\circ} \backslash {}_{\mc{L}}W^{\circ}_{F\mc{L}}. 
\]
Then each $\beta \in {}_{\mc{L}}\ul{W}^{\circ}_{F\mc{L}}$ contains an element $w^{\beta}$ uniquely characterized by 
$w^{\beta}(\Phi_{\mc{L}}^+) \subset \Phi^+(G,T)$ as \cite[4.2 Lemma]{LuYuEndHecchv3}. 
For a two-sided cell $\mbf{c}$ of $(W_{\mc{L}})^{\circ}$, 
we put 
\[
 \mf{B}_{\mbf{c}}^{\circ}= \{ \beta \in {}_{\mc{L}}\ul{W}^{\circ}_{F\mc{L}} \mid w^{\beta} \sigma \mbf{c} = \mbf{c} \}, \quad 
 \Omega_{\mbf{c}}=\mr{Stab}_{\Omega_{\mc{L}}}(\mbf{c}). 
\]
There is a $\sigma$-twisted conjugation action $\Ad_{\sigma}$ of $\Omega_{\mc{L}}$ on ${}_{\mc{L}}\ul{W}^{\circ}_{F\mc{L}}$ defined by 
\[
 \Ad_{\sigma} (\gamma) (\beta)=\gamma \beta {\sigma}(\gamma)^{-1}, 
\]
because the action of $F$ on $\pi_0 (G)$ is trivial. 
This restricts to an action of 
$\Omega_{\mbf{c}}$ on $\mf{B}_{\mbf{c}}^{\circ}$, 
which is also denoted by $\Ad_{\sigma}$. 
For $\beta \in \mf{B}_{\mbf{c}}^{\circ}$, 
let $\Omega_{\mbf{c},\beta}$ be the stabilize of $\beta$ in $\Omega_{\mbf{c}}$ under $\Ad_{\sigma}$. 

Let $H_{\mc{L}}$ be the connected reductive algebraic group over $\ol{k}$ with a maximal torus identified with $T_{\ol{k}}$ and the root system $\Phi (H,T_{\ol{k}})=\Phi_{\mc{L}}$ as \cite[9.1]{LuYuEndHecchv3}. 
For each $\beta \in {}_{\mc{L}}\ul{W}^{\circ}_{F\mc{L}}$, we define an Frobenius endomorphism $\mr{F}_{\beta}$ on $H_{\mc{L}}$ as in \cite[12.1]{LuYuEndHecchv3}. 
%For each $\beta \in \mf{B}_{\mbf{c}}^{\circ}$, we define a Frobenius structure $\sigma_{\beta}$ on $H_{\mc{L}}$ as \cite[12.1]{LuYuEndHecchv3}. 
Let $\sigma_{\beta}$ be the automorphism of $\wh{H}_{\mc{L}}$ given by the Frobenius structure $\mr{F}_{\beta}$. We have $\sigma_{\beta}=\Ad (\dot{w}^{\beta}) \sigma$. 
We take $h_{\beta} \in \wh{H}_{\mc{L}}$ such that $\Ad (h_{\beta})(\sigma_{\beta} (u_{\mbf{c}}))= u_{\mbf{c}}$, and define an automorphism $\tau_{\beta}$ of $\mc{G}_{\mbf{c}}$ by $\Ad (h_{\beta}) \circ \sigma_{\beta}$. 

Let $\Rep (G(k))$ denote the category of finite dimensional representations of $G(k)$ over $\ol{\mb{Q}}_{\ell}$. 

\begin{thm}\label{thm:RepGcateq}
	We fix a Whittaker datum of $G^{\circ}$. 
	Let $\mf{B}_{\mbf{c},\Omega_{\mbf{c}}}^{\circ}$ be a set of representative of 
	$\mf{B}_{\mbf{c}}^{\circ}/\Ad_{\sigma}(\Omega_{\mbf{c}})$. 
	Then there is a natural equivalence 
	\[
	\Rep (G(k)) \cong \bigoplus_{\mf{o}} \bigoplus_{\mf{c}}
	\bigoplus_{\beta \in \mf{B}_{\mbf{c},\Omega_{\mbf{c}}}^{\circ}} \Sh^{\mc{G}_{\mbf{c}},\tau_{\beta}}(\mc{G}_{\mbf{c}})^{\Omega_{\mbf{c},\beta}}
	\]
	of abelian categories, where $\mf{o}$ runs thorough the semisimple parameters of $G$, $\mf{c}$ runs thorough the unipotent parameters of $G$ with respect to $\mf{o}$ and 
	we take $\mbf{c}$ for each $\mf{c}$ as Lemma \ref{lem:ccexist}. 
\end{thm}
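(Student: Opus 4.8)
The plan is to reduce the representation category of $G(k)$ to the unipotent situation handled by Proposition \ref{prop:unipcatdec}, via a chain of known reductions. First I would decompose $\Rep(G(k))$ by the action of the center of the group algebra, which amounts to decomposing along semisimple parameters. Concretely, since each connected component of $G$ is defined over $k$, one uses Deligne--Lusztig theory together with Jordan decomposition of characters: there is a block decomposition
\[
\Rep(G(k)) \cong \bigoplus_{\mf{o}} \Rep(G(k))^{\mf{o}},
\]
where $\mf{o}$ runs through the semisimple parameters, i.e.\ $W$-orbits in $\Ch(T)$ containing an $F$-stable $W^{\circ}$-orbit (using the identification \eqref{eq:chTiso}). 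The summand $\Rep(G(k))^{\mf{o}}$ is, by the theory of the endoscopic group, equivalent to a category of ``unipotent-type'' representations for the reductive group $H_{\mc{L}}$ with root system $\Phi_{\mc{L}}$, twisted by the disconnectedness group $\Omega_{\mc{L}}$ and by the Frobenius twist $\mr{F}_{\beta}$ for varying $\beta \in {}_{\mc{L}}\ul{W}^{\circ}_{F\mc{L}}$. This is precisely the content of the categorical refinement of \cite[\S 9--12]{LuYuEndHecchv3}, and the fixed Whittaker datum of $G^{\circ}$ is what rigidifies this equivalence (it pins down the relevant Howlett--Lehrer/Green-function normalization so the identification is canonical and not just up to a twist, as in Remark \ref{rem:choiceh}).

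Next, within a fixed $\mf{o}$, I would further decompose along unipotent parameters $\mf{c}$. After the endoscopic reduction the category is a sum over two-sided cells of $(W_{\mc{L}})^{\circ}$; grouping these cells into $W$-orbits of $F$-stable two-sided cells $\mf{c}^{\circ} \subset W \times \mf{o}^{\circ}$ and invoking Lemma \ref{lem:ccexist}, one picks for each $\mf{c}$ a distinguished two-sided cell $\mbf{c} \subset (W_{\mc{L}})^{\circ}$ with $\mf{c} \cap ((W_{\mc{L}})^{\circ} \times \{\mc{L}\}) = \Omega_{\mc{L}} \mbf{c} \times \{\mc{L}\}$. The block for $\mf{c}$ is then a sum over the $\beta$'s that preserve $\mbf{c}$ --- this is exactly the set $\mf{B}_{\mbf{c}}^{\circ}$ --- of the unipotent-cell categories $\Rep_{\mr{u}}^{\mbf{c}}(H_{\mc{L}}^{\mr{F}_{\beta}})$, but with the caveat that the stabilizer $\Omega_{\mbf{c}} = \mr{Stab}_{\Omega_{\mc{L}}}(\mbf{c})$ still acts, permuting the $\beta$'s via $\Ad_{\sigma}$. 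Choosing the representative set $\mf{B}_{\mbf{c},\Omega_{\mbf{c}}}^{\circ}$ of $\mf{B}_{\mbf{c}}^{\circ}/\Ad_{\sigma}(\Omega_{\mbf{c}})$, the contribution of each orbit is the $\Omega_{\mbf{c},\beta}$-equivariant objects in the block at a single $\beta$, where $\Omega_{\mbf{c},\beta} = \mr{Stab}_{\Omega_{\mbf{c}}}(\beta)$ is the residual stabilizer.

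Finally, I apply Proposition \ref{prop:unipcatdec} to the connected reductive group $H_{\mc{L}}$ with Frobenius $\mr{F}_{\beta}$: since the two-sided cell $\mbf{c}$ has the same associated unipotent class $u_{\mbf{c}}$ and the same canonical quotient $\mc{G}_{\mbf{c}}$ regardless of the Frobenius twist, we get
\[
\Rep_{\mr{u}}^{\mbf{c}}(H_{\mc{L}}^{\mr{F}_{\beta}}) \cong \Sh^{\mc{G}_{\mbf{c}},\tau_{\beta}}(\mc{G}_{\mbf{c}}),
\]
where $\tau_{\beta} = \Ad(h_{\beta}) \circ \sigma_{\beta}$ with $\sigma_{\beta} = \Ad(\dot w^{\beta})\sigma$ the automorphism of $\wh H_{\mc{L}}$ from $\mr{F}_{\beta}$, matching the definition given just before the theorem. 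Taking $\Omega_{\mbf{c},\beta}$-equivariant objects on both sides and assembling over $\mf{o}$, $\mf{c}$, and $\beta \in \mf{B}_{\mbf{c},\Omega_{\mbf{c}}}^{\circ}$ yields the asserted equivalence.

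The main obstacle is verifying that the action of $\Omega_{\mbf{c}}$ on the disjoint union of the $\beta$-blocks is compatible, under Proposition \ref{prop:unipcatdec}, with a geometric action on $\coprod_{\beta} \Sh^{\mc{G}_{\mbf{c}},\tau_{\beta}}(\mc{G}_{\mbf{c}})$ --- i.e.\ that $\gamma \in \Omega_{\mbf{c}}$ sends the $\beta$-block isomorphically to the $\Ad_{\sigma}(\gamma)(\beta)$-block in a way matching the translation-by-$h_{\beta}$ indeterminacy described in Remark \ref{rem:choiceh}. This is where the functoriality (and not merely the existence) of the equivalence in Proposition \ref{prop:unipcatdec} is essential, and where one must carefully track how changing $\beta$ by $\Ad_{\sigma}(\gamma)$ changes $\sigma_{\beta}$, $h_{\beta}$, and hence $\tau_{\beta}$ by an inner automorphism realized by a specific element of $\mc{G}_{\mbf{c}}$; the Whittaker normalization is what makes all these choices cohere. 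Once this compatibility is in place, passing to equivariant objects is formal.
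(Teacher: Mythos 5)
Your proposal follows essentially the same route as the paper: decomposition into Lusztig series, the Lusztig--Yun endoscopic equivalence (\cite[12.7 Corollary]{LuYuEndHecchv3}) to reduce to unipotent-cell blocks for $H_{\mc{L}}^{\mr{F}_{\beta}}$ with residual $\Omega_{\mbf{c},\beta}$-equivariance, and then Proposition~\ref{prop:unipcatdec} on each block. The paper is somewhat more explicit about the disconnectedness of $G$: it first passes from $\Rep_{\mf{o}}(G(k))$ to $\Rep_{\mf{o}^{\circ}}(G^{\circ}(k))^{\pi_0(G)_{\mf{o}^{\circ}}}$ via \cite[(3.3),(3.4)]{SolEnddisfin}, applies Lusztig--Yun to the connected group, and only then lifts $\Omega_{\mbf{c}}^{\circ}$-equivariance to $\Omega_{\mbf{c}}$-equivariance using $\pi_0(G)_{\mf{o}^{\circ},\mf{c}^{\circ}} \cong \Omega_{\mbf{c}}/\Omega_{\mbf{c}}^{\circ}$; you compress this into a single ``endoscopic reduction,'' which is fine as a sketch but does elide a real step.

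One inaccuracy worth flagging: you attribute to the Whittaker datum the job of resolving the indeterminacy in Remark~\ref{rem:choiceh}, i.e.\ the choice of $h_{\beta}$ (equivalently $h_{\mbf{c}}$) that enters $\tau_{\beta}$. In the paper these are two separate issues. The Whittaker datum is used in \cite[5.11]{LuYuEndHecchv3} to rigidify the IC sheaves appearing in the endoscopic equivalence of \cite[12.7 Corollary]{LuYuEndHecchv3}, so that $\Rep_{\mf{o}}^{\mf{c}}(G(k)) \cong \bigoplus_{\beta} \Rep_{\mr{u}}^{\mbf{c}}(H_{\mc{L}}^{\mr{F}_{\beta}})^{\Omega_{\mbf{c},\beta}}$ becomes canonical rather than an equivalence defined only up to automorphism. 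The choice of $h_{\beta}$ in Proposition~\ref{prop:unipcatdec} is a genuinely different ambiguity, and changing it simply translates the target category $\Sh^{\mc{G}_{\mbf{c}},\tau_{\beta}}(\mc{G}_{\mbf{c}})$ by an element of $\mc{G}_{\mbf{c}}$ as described in Remark~\ref{rem:choiceh}; the Whittaker datum does not pin it down, and the paper does not claim that it does. Your concern about the $\Omega_{\mbf{c}}$-action matching the geometric translation is the right thing to worry about, but the resolution is the functoriality of Proposition~\ref{prop:unipcatdec}, not the Whittaker normalization.
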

\begin{proof}
	By the decomposition into Lusztig series (\cf \cite[Proposition 11.3.2]{DiMiRepLie2nd}), 
	we have 
	\begin{equation}\label{en:decssp}
		\Rep (G(k)) \cong \bigoplus_{\mf{o}} \Rep_{\mf{o}} (G(k)), 
	\end{equation}
	where $\mf{o}$ runs thorough the semisimple parameters of $G$. 
	
	Let $\mf{o}$ be a semisimple parameter of $G$. 
	We take a representative $\mc{L}$ of $\mf{o}$, and put 
	$\mf{o}^{\circ}=W^{\circ}\mc{L}$.  
	%Then we have 
	%\begin{equation*}
	%\Rep_{\mf{o}} (G(k)) \cong \left( \bigoplus_{\beta \in {}_{\mc{L}} \ul{W}_{F\mc{L}}} \Rep_{\mr{u}}(H^{\mr{F}_{\beta}}) \right)^{\Omega_{\mc{L}}}. 
	%\end{equation*}
	For each unipotent parameter $\mf{c}$ of $G$ with respect to $\mf{o}$, we take $\mbf{c}$ as Lemma \ref{lem:ccexist}.
	%a two-sided cell $\mf{c}^{\circ} \subset W^{\circ} \times \mf{o}^{\circ}$ as Definition \ref{defn:ssunippara} \ref{en:unippar}. Further, we take a two-sided cell $\mbf{c} \subset W_{\mc{L}}^{\circ}$ such that $\mf{c}^{\circ} \cap (W_{\mc{L}}^{\circ} \times \{ \mc{L} \})=\Omega_{\mc{L}}^{\circ} \mbf{c} \times \{\mc{L}\}$ as \cite[11.4]{LuYuEndHecchv3}. 
	
	By \cite[(3.3),(3.4)]{SolEnddisfin}, 
	we have 
	\begin{equation*}
		\Rep_{\mf{o}} (G(k)) \cong \Rep_{\mf{o}} (G^{\circ}(k))^{\pi_0 (G)} \cong  \Rep_{\mf{o}^{\circ}} (G^{\circ}(k))^{\pi_0 (G)_{\mf{o}^{\circ}}} . 
	\end{equation*}
	These induce 
	\begin{align}
		\Rep_{\mf{o}}^{\mf{c}} (G(k)) \cong \Rep_{\mf{o}}^{\mf{c}} (G^{\circ}(k))^{\pi_0 (G)} 
		&\cong 
		\Rep_{\mf{o}^{\circ}}^{W_{\mf{o}^{\circ}} \mf{c}^{\circ}} (G^{\circ}(k))^{\pi_0 (G)_{\mf{o}^{\circ}}} \notag \\ 
		&\cong 
		\Rep_{\mf{o}^{\circ}}^{\mf{c}^{\circ}} (G^{\circ}(k))^{\pi_0 (G)_{\mf{o}^{\circ},\mf{c}^{\circ}}} \label{en:Reppi0oc} 
	\end{align}
	by the construction and $\mf{c} \cap (W^{\circ} \times \mf{o}^{\circ})= W_{\mf{o}^{\circ}} \mf{c}^{\circ}$. 
%There is an induced Frobenius morphism $F$ on $H$ by \cite[Proposition 1.4.28]{GeMaChaFinLie}. 
By \cite[12.7 Corollary]{LuYuEndHecchv3}, 
	\eqref{en:Reppi0oc} is equivalent to 
	\begin{equation*}
		\left( \bigoplus_{\beta \in \mf{B}_{\mbf{c},\Omega_{\mbf{c}}^{\circ}}^{\circ}} \Rep_{\mr{u}}^{\mbf{c}}(H_{\mc{L}}^{\mr{F}_{\beta}})^{\Omega_{\mbf{c},\beta}^{\circ}} \right)^{\pi_0 (G)_{\mf{o}^{\circ},\mf{c}^{\circ}}} 
		\cong 
		\left( \left( \bigoplus_{\beta \in \mf{B}_{\mbf{c}}^{\circ}} \Rep_{\mr{u}}^{\mbf{c}}(H_{\mc{L}}^{\mr{F}_{\beta}}) \right)^{\Omega_{\mbf{c}}^{\circ}} \right)^{\pi_0 (G)_{\mf{o}^{\circ},\mf{c}^{\circ}}} , 
	\end{equation*}
	where $\mf{B}_{\mbf{c},\Omega_{\mbf{c}}^{\circ}}^{\circ}$ is a set of representative of 
	$\mf{B}_{\mbf{c}}^{\circ}/\Ad_{\sigma}(\Omega_{\mbf{c}}^{\circ})$. 
	We note that a Whittaker datum is used in \cite[5.11]{LuYuEndHecchv3} to rigidify IC sheaves, which are used in the construction of the equivalence of categories in \cite[12.7 Corollary]{LuYuEndHecchv3}. 
	This is equivalent to 
	\begin{equation*}
		\left( \bigoplus_{\beta \in \mf{B}_{\mbf{c}}^{\circ}} \Rep_{\mr{u}}^{\mbf{c}}(H_{\mc{L}}^{\mr{F}_{\beta}}) \right)^{\Omega_{\mbf{c}}}  
	\end{equation*}
	using $\pi_0 (G)_{\mf{o}^{\circ},\mf{c}^{\circ}} \cong \Omega_{\mbf{c}}/\Omega_{\mbf{c}}^{\circ}$. 
	Therefore we have  
	\begin{equation*}
		\Rep_{\mf{o}}^{\mf{c}} (G(k)) \cong \bigoplus_{\beta \in \mf{B}_{\mbf{c},\Omega_{\mbf{c}}}^{\circ}} \Rep_{\mr{u}}^{\mbf{c}}(H_{\mc{L}}^{\mr{F}_{\beta}})^{\Omega_{\mbf{c},\beta}} .
	\end{equation*}
	Hence the claim follows from Proposition \ref{prop:unipcatdec}. 
\end{proof}

\begin{rem}
The Whittaker datum in Theorem \ref{thm:RepGcateq} is unique if $Z(G^{\circ})$ is connected by Corollary \ref{cor:whuni}. 
\end{rem}

\begin{lem}\label{lem:edlift}
Let $R$ be a normal, local, Noetherian domain with infinite residue field. 
Let $G$ be a split reductive group scheme over $R$. 
Assume that the characteristic of the residue field is good for $G$. 
Let $s$ be a point of $\Spec R$. 
Let $\mf{n} \in \Lie G_{\ol{k(s)}}$ be a nilpotent element. 
Then there is an equidimensional nilpotent section $X$ of $\Lie G$ such that 
$X_s$ is $G_{\ol{k(s)}}$-conjugate to $\mf{n}$. 
\end{lem}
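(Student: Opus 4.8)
The plan is to put a conjugate of $\mf{n}$ into the degree-$\ge 2$ part $\mf{q}$ of the parabolic subalgebra attached to a Jacobson--Morozov cocharacter $\lambda$, arranged so that $\lambda$ lies in the \emph{constant} cocharacter lattice $X_*(T)$; then to produce one global section of $\mf{q}$ that lands, in every fibre, in the distinguished ($P_\lambda$-dense) orbit, using that $\Spec R$ is local with infinite residue field.

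First I would reduce to $G$ semisimple (the nilpotent cone lies in $\Lie[G,G]$ and $\Ad$-conjugacy factors through the adjoint group), and keep the split maximal torus $T$ of the given Borel pair, so that $X_*(T)$ and the root datum are the same over every point of $\Spec R$. Since the residue characteristic of $R$ at its closed point $\mf{m}$ is good for $G$, every residue field of $R$ has characteristic $0$ or that good prime, so every fibre $G_{\ol{k(t)}}$ is in good or zero characteristic and Premet's Jacobson--Morozov / Kempf--Rousseau theory (and Kostant's in characteristic zero) applies uniformly. After replacing $\mf{n}$ by a $G(\ol{k(s)})$-conjugate (harmless, since only its conjugacy class is prescribed), I may assume it admits an associated cocharacter with image in $T_{\ol{k(s)}}$; let $\lambda\in X_*(T)=X_*(T_{\ol{k(s)}})$ be the corresponding element. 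Set $\mf{g}=\Lie G$, let $P=P_\lambda\subseteq G$ be the associated parabolic $R$-subgroup, and let $\mf{q}=\bigoplus_{i\ge 2}\mf{g}(\lambda,i)\subseteq\mf{g}$, where $\mf{g}(\lambda,i)$ is the weight-$i$ space for $\Ad\circ\lambda$. Because $G$ is split, $\mf{q}$ is a direct summand of $\mf{g}$, free of some rank $N$, contained in $\Lie R_{\mr{u}}(P)$; in particular every element of $\mf{q}_{\ol{k(t)}}$ is nilpotent in every fibre, so any section contained in $\mf{q}$ is automatically a nilpotent section.

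Now put $d_0=\dim P-N$ and let $V^{\circ}=\{\,v\in\mf{q}\mid \dim C_P(v)\le d_0\,\}\subseteq\mf{q}\cong\mb{A}^N_R$, which is open by upper semicontinuity of stabiliser dimension. Since $\dim(P_t\cdot v)\le N$ always, $V^{\circ}_t$ is exactly the locus where the $P_t$-orbit of $v$ is dense in $\mf{q}_{\ol{k(t)}}$; by the good-characteristic analogue of Kostant's theorem this locus, when nonempty, equals $\{v\mid\lambda\text{ is an associated cocharacter of }v\}$, and for such $v$ one has $C_{G_{\ol{k(t)}}}(v)\subseteq P_t$, whence $\dim C_G(v)=\dim C_P(v)=d_0$, the same for every $t$. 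By the uniformity of the classification of nilpotent orbits in good (and zero) characteristic, the cocharacter $\lambda$, being a ``weighted Dynkin diagram'' for one fibre, is so for all, so $V^{\circ}_t\neq\varnothing$ for every $t$; moreover $V^{\circ}_s$ is the $P_s$-orbit through the chosen conjugate of $\mf{n}$. Since $k(\mf{m})$ is infinite and $V^{\circ}_{\mf{m}}$ is a nonempty Zariski-open subset of affine space over $k(\mf{m})$, it has a $k(\mf{m})$-rational point; lift it through $R\twoheadrightarrow k(\mf{m})$ to an element $X\in\mf{q}=R^N$, viewed as a section $X\colon\Spec R\to\mb{A}^N_R$. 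Then $X^{-1}(V^{\circ})$ is an open subset of $\Spec R$ containing $\mf{m}$, hence is all of $\Spec R$ because $R$ is local; thus $X_t\in V^{\circ}_t$ for every $t$. Consequently $X$ is a nilpotent section with $\dim G_{\ol{k(t)}}\cdot X_t=\dim G-d_0$ constant, i.e.\ equidimensional, and $X_s\in V^{\circ}_s$ is $P_s(\ol{k(s)})$-conjugate, hence $G_{\ol{k(s)}}$-conjugate, to $\mf{n}$.

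The routine ingredients are the openness of $V^{\circ}$ and the ``an open subset containing the closed point of a local scheme is everything'' step. The substantive part, and the one I expect to require the most care, is the good-characteristic input: that $\mf{n}$'s associated cocharacter may be taken inside the constant lattice $X_*(T)$; that for $v$ in the dense $P_\lambda$-orbit of $\mf{g}(\lambda,\ge 2)$ one has $C_G(v)\subseteq P_\lambda$; and that this dense orbit is nonempty and of constant type across all fibres of $\Spec R$ -- all drawn from Premet's work together with the uniformity of the nilpotent classification in good characteristic, plus the fibrewise identifications $X_*(T)=X_*(T_{\ol{k(t)}})$ and $\mf{q}_{\ol{k(t)}}=\mf{g}_{\ol{k(t)}}(\lambda,\ge 2)$ that make $V^{\circ}$ an honest $R$-scheme with the expected fibres.
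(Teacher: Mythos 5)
Your argument is in essence a self-contained re-derivation of the result that the paper simply cites: the paper's proof is a two-line reduction (adjoint quotient to reduce to $G$ adjoint simple via \cite[XXIV, Prop.\ 5.10]{SGA3-3}, then invoke \cite[5.4 Theorem]{McNCentnilp}, with a separate detour through $\GL_n$ in type $A$), and the machinery you describe -- associated cocharacter $\lambda \in X_*(T)$, the $R$-submodule $\mf{q} = \mf{g}(\lambda, \ge 2)$, the open locus where $\dim C_P(v) \le d_0$, a rational point via the infinite residue field, and the ``open containing the closed point is everything'' step -- is exactly the kind of argument McNinch uses to prove his Theorem 5.4. So your proposal is conceptually the ``unpacked'' version of the paper's citation; it buys a self-contained proof at the cost of carrying the full good-characteristic orbit theory by hand, while the paper's version is shorter and delegates all that to a single reference.

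There is one place where your reduction is less careful than the paper's, and it matters: you reduce only to ``$G$ semisimple'' and then assert that Premet's Jacobson--Morozov/associated-cocharacter theory, including the inclusion $C_{G}(v) \subseteq P_\lambda$ and the identification of the dense $P_\lambda$-orbit locus, ``applies uniformly.'' That package rests on separability (smoothness of nilpotent centralizers), which in type $A$ with $p$ dividing the rank issue can fail for $\mr{SL}_n$ or $\mr{PGL}_n$ even though every $p$ is good for type $A$; this is precisely the reason McNinch's hypothesis is ``$D$-standard'' rather than merely ``good characteristic,'' and precisely why the paper's proof reduces to $G$ adjoint simple and then, in type $A$, further passes to $\GL_n$ (which is $D$-standard by \cite[(3.9.3)]{McNCentnilp}) rather than staying with $\mr{PGL}_n$. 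To make your argument airtight you should build in the same safety valve: after reducing to simple adjoint type, replace type-$A$ factors by $\GL_n$ before running the Premet/Richardson-orbit machinery, or otherwise verify that the inputs you need (separability of the relevant orbits, $C_G(v) \subseteq P_\lambda$) hold for the specific group you keep. The remaining steps -- openness of $V^{\circ}$ by semicontinuity of stabilizer dimension, existence of a $k(\mf{m})$-point of a nonempty open in affine space over an infinite field, and the local-ring observation that an open set containing $\mf{m}$ is all of $\Spec R$ -- are correct and are exactly the routine part.
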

\begin{proof}
By taking the adjoint quotient, we may assume that $G$ is 
adjoint and simple by \cite[XXIV, Proposition 5.10]{SGA3-3}. If $G$ is not type A, the claim follows from 
\cite[5.4 Theorem]{McNCentnilp} since $G$ is $D$-standard in the sense of \cite[3.9]{McNCentnilp}. In the type A case, we are reduced to the case where $G=\GL_n$. Then the claim follows from \cite[5.4 Theorem]{McNCentnilp} since $\GL_n$ is $D$-standard by \cite[(3.9.3)]{McNCentnilp}. 
\end{proof}

\begin{lem}\label{lem:Aiso}
Let $R$ be a normal, local, Noetherian domain. 
Let $G$ be a reductive group scheme over $R$. 
Assume that the characteristic of the residue field is good for $G$. 
Let $X$ be an equidimensional nilpotent section of $\Lie G$. 
Then we have a natural isomorphism 
\begin{equation*}
 \pi_0( (Z_G(X)/Z(G))_{\ol{s}}) \cong \pi_0( (Z_G(X)/Z(G))_{\ol{t}}) 
\end{equation*}
for any points $s,t$ of $\Spec R$. 
\end{lem}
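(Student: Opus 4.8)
The plan is to realize the finite group $\pi_0((Z_G(X)/Z(G))_{\ol{s}})$ as the geometric fibre of a finite \'etale group scheme $\mc{A}$ over $\Spec R$. Since $R$ is a domain, $\Spec R$ is connected, and a finite \'etale group scheme over a connected base has, for any two points, canonically isomorphic geometric fibres (the fibre‑functor isomorphisms respect the group law, as it comes from $\mc{A}$ itself); this will give the asserted natural isomorphism $\mc{A}_{\ol{s}}\cong\mc{A}_{\ol{t}}$, and there is no need to reduce to a chain of one‑step specializations.

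To construct $\mc{A}$, I would first show that $\mc{H}:=Z_G(X)$ is a \emph{smooth} affine group scheme over $\Spec R$. Affineness is clear as $\mc{H}$ is closed in the affine $R$‑group $G$. Each fibre $\mc{H}_{\ol{x}}=Z_{G_{\ol{x}}}(X_{\ol{x}})$ is smooth over $\ol{k(x)}$ because the characteristic is good (centralizers of nilpotent elements of the Lie algebra are smooth in good characteristic); the hypothesis that $X$ is an equidimensional nilpotent section says precisely that these fibres have constant dimension, and together with the good‑characteristic theory of nilpotent sections (the framework of \cite{McNCentnilp}, around Theorem~5.4, as already invoked in Lemma~\ref{lem:edlift}) this forces $\mc{H}\to\Spec R$ to be flat; a flat, finite‑type morphism with smooth fibres is smooth. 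Next I would pass to $\ol{\mc{H}}:=\mc{H}/Z(G)$: the centre $Z(G)$ is a faithfully flat closed central subgroup of $\mc{H}$, so the fppf quotient $\ol{\mc{H}}$ is again an affine $R$‑group; it is flat over $\Spec R$ because flatness descends along the faithfully flat $Z(G)$‑torsor $\mc{H}\to\ol{\mc{H}}$, and its fibres $\ol{\mc{H}}_{\ol{x}}=(Z_G(X)/Z(G))_{\ol{x}}$ are smooth (a quotient of a smooth group by a normal subgroup over a field is smooth), so $\ol{\mc{H}}$ is smooth over $\Spec R$. Because $\ol{\mc{H}}\to\Spec R$ is smooth, the relative identity component $\ol{\mc{H}}^{\circ}$ exists as an open subgroup scheme (\cite[Exp.\ $\mr{VI}_{\mr{B}}$]{SGA3-3}), and $\mc{A}:=\ol{\mc{H}}/\ol{\mc{H}}^{\circ}$ is a separated \'etale group scheme over $\Spec R$ whose geometric fibre at $\ol{x}$ is $\pi_0((Z_G(X)/Z(G))_{\ol{x}})$.

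It remains to see that $\mc{A}\to\Spec R$ is \emph{finite}, and this is the crux. Here one invokes the good‑characteristic component‑group theory: the geometric component group of the centralizer of a nilpotent element in good characteristic has order independent of the field (via associated cocharacters and the Bala–Carter picture; \cf the results of McNinch and McNinch–Sommers underlying \cite{McNCentnilp}). Thus the separated \'etale group scheme $\mc{A}$ has geometric fibres of constant cardinality $n$, and a separated \'etale group scheme with finite fibres of constant order over a Noetherian base is finite \'etale — over the strict henselization at a geometric point it splits, by Hensel's lemma for \'etale morphisms, into $n$ copies of the base (each component is an open neighbourhood of the closed point, hence the whole local scheme), and one descends. (Alternatively, one can argue directly that along an equidimensional nilpotent section the geometric orbit type of $X_{\ol{x}}$ is constant, using normality of $R$ and the fact that equal centralizer dimension under specialization forces equal nilpotent orbit closures, and then appeal to the same component‑group theory; but the group‑scheme formulation makes the naturality transparent.) Finally, $\mc{A}$ being finite \'etale over the connected $\Spec R$, its geometric fibres at $s$ and $t$ are naturally isomorphic, which is the claim.

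The main obstacle is exactly the finiteness of $\mc{A}$: a priori it is only separated and \'etale, and excluding "disappearing components" at special points is the substantive arithmetic statement that component groups of nilpotent centralizers are insensitive to passing between good characteristics (and to characteristic $0$), for which the good‑characteristic theory of nilpotent orbits is essential. The secondary point requiring care is the smoothness of $Z_G(X)$ over $\Spec R$, i.e.\ flatness at the closed point, which is not formal over a merely normal base and is precisely what the notion of an equidimensional nilpotent section and the cited work of McNinch are designed to provide.
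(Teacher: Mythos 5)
Your approach is genuinely different from the paper's, and it is worth comparing them carefully. The paper's proof is extremely short: after replacing $R$ by an \'etale extension to make $G$ split, it reduces (as in Lemma~\ref{lem:edlift}) to simple adjoint factors via the adjoint quotient; for non-type-$A$ factors it invokes \cite[7.2 Theorem]{McNCentnilp} directly, and for type $A$ it further reduces to $\GL_n$, where both component groups are \emph{trivial} (the centralizer of a nilpotent in $\GL_n$ is connected), so the natural isomorphism is tautological. You instead try to build, over all of $\Spec R$, a finite \'etale group scheme $\mc{A}=\ol{\mc{H}}/\ol{\mc{H}}^{\circ}$ whose geometric fibers are the component groups, and then use connectedness of $\Spec R$. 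This is essentially a reconstruction of the content of McNinch's Theorem~7.2 rather than a citation of it, so the strategy is sound but heavier.

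There are two places where your argument has real gaps that the paper's reduction step is designed to close. First, your claims that $\mc{H}=Z_G(X)$ is smooth over $R$ and that $\mc{A}$ has geometric fibers of constant order are not available for an arbitrary reductive group scheme $G$ in good characteristic; McNinch's results (smoothness of the centralizer, constancy of the component group) are proved for $D$\emph{-standard} groups. The good-characteristic hypothesis on the root system alone does not exclude the pathologies that the $D$-standard condition rules out (the relevant example is type $A$ with nontrivial isogenies, which is exactly why the paper — like McNinch — routes type $A$ through $\GL_n$). You must therefore insert the same reduction to the adjoint quotient and to simple factors, using \cite[XXIV, Proposition 5.10]{SGA3-3} and the compatibility of $Z_G(X)/Z(G)$ with that quotient, before any of the McNinch citations apply; without it, the smoothness/flatness step at the start of your construction is unjustified. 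Second, you correctly flag "constant cardinality of the fibers of $\mc{A}$" as the crux, but you leave it as an appeal to "good-characteristic component-group theory." That is precisely the statement of \cite[7.2 Theorem]{McNCentnilp} (after the $D$-standard reduction), so the honest version of your argument would cite it at that point — at which stage your construction of $\mc{A}$ and the finite-\'etale-over-connected-base conclusion become a somewhat roundabout rederivation of what the cited theorem already gives. On the positive side, your observation that packaging the component groups into a finite \'etale group scheme is what makes the isomorphism \emph{natural} (not merely abstract) is exactly the right conceptual point, and your Zariski main theorem / strict henselization argument that a separated \'etale morphism with constant finite fiber cardinality over an irreducible normal Noetherian base is finite is correct.
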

\begin{proof}
By replacing $R$ by an etale extension, we may assume that $G$ is split. Then we are reduced to the simple adjoint type A case in the same way as the proof of Lemma \ref{lem:edlift} using \cite[7.2 Theorem]{McNCentnilp}. In that case, both groups are trivial. So the proof is complete. 
\end{proof}

For a finite group $\Gamma$, 
let $\Irr_{\ol{\mb{Q}}_{\ell}}(\Gamma)$ 
be the set of isomorphism classes of 
irreducible representations of $\Gamma$ 
over $\ol{\mb{Q}}_{\ell}$.

Let $H$ be a reductive group over a field $\Lambda$. 
Let $u \in H^{\circ}$ be a special unipotent element. 
We put 
\[ 
A_{H^{\circ}}(u)= \pi_0 \left( Z_{H^{\circ}}(u)/
Z( H^{\circ} ) \right). 
\]

\begin{lem}\label{lem:FactAu}
	Let $H$ be a connected reductive group over $\ol{k}$, with Frobenius
	morphism $\mr{F}_H$. 
	Let $C$ be an $F$-stable unipotent conjugacy class of $H$. 
	Then there is $u \in C^{\mr{F}_H}$ such that $\mr{F}_H$ on $A_H(u)$ is trivial. 
\end{lem}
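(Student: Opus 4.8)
The plan is to find a suitable representative $u$ of $C$ over $\ol{k}$ by passing to a deformation situation where the good-characteristic results of the preceding lemmas apply, and then to exploit the rationality of the conjugacy class together with the triviality of the component group action. First I would reduce to the case where $H$ is adjoint and simple: by Lemma~\ref{lem:Aiso} (or rather the argument behind it, via \cite[XXIV, Proposition 5.10]{SGA3-3}) the group $A_H(u)$ and the $\mr{F}_H$-action on it are insensitive to replacing $H$ by its adjoint quotient, and they are compatible with products, so it suffices to treat each $\mr{F}_H$-orbit of simple factors, i.e.\ a single simple adjoint group with Frobenius $\mr{F}_H$ possibly permuting a Weil restriction. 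The type A case is then handled by Lemma~\ref{lem:edlift} and Lemma~\ref{lem:Aiso}: there $A_H(u)$ is trivial for the relevant $u$, so the statement is vacuous once we know $C^{\mr{F}_H}\neq\emptyset$, which holds by Lang's theorem since $C$ is $\mr{F}_H$-stable and $H$ acts transitively on it.

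For the remaining types, the idea is to choose $u$ so that $Z_H(u)$ is as $\mr{F}_H$-stable as possible. Concretely, I would use the fact that an $\mr{F}_H$-stable unipotent class always contains an $\mr{F}_H$-stable representative whose reductive part of the centralizer is $\mr{F}_H$-stable in a split way; the cleanest route is to lift to characteristic zero. Namely, by Lemma~\ref{lem:edlift} applied to a split reductive group scheme over a suitable DVR or over $W(\ol{k})$ (in good characteristic, which we may assume after discarding finitely many bad primes — but here we must be careful, since the lemma in the paper does not assume good characteristic for the conclusion we want, so instead I would invoke the Bala--Carter/Pommerening classification and the fact that unipotent classes, their representatives, and the component groups $A_H(u)$ together with the Frobenius action are "the same" in good characteristic as over $\mb{C}$). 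Over $\mb{C}$ one can pick $u$ inside an $\mr{F}_H$-stable principal $\SL_2$-triple attached to an $\mr{F}_H$-stable Levi in standard position; then $Z_H(u)$ and its component group $A_H(u)$ inherit an $\mr{F}_H$-action, and the key point is that for the classical and exceptional groups the finite group $A_H(u)$ (a symmetric group $S_n$ with $n\le 5$, or $\mb{Z}/2\mb{Z}$, etc.) has the property that any Frobenius action on it coming from this situation is inner, hence trivial on the group itself after the canonical choice — or, failing that, one replaces $u$ within its $\mr{F}_H$-rational class by a twisted representative to kill the outer part.

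The main obstacle I expect is exactly this last point: producing a representative $u$ for which $\mr{F}_H$ acts trivially, not merely by an automorphism, on $A_H(u)$. The subtlety is that $\mr{F}_H$ a priori acts on $A_H(u)$ through $\Ad(h)\circ(\text{the "abstract" Frobenius})$ for some $h\in H$ with $\Ad(h)(\mr{F}_H(u))=u$, and one must show this composite can be arranged to be the identity automorphism — not just an inner one — by a good choice of $u$ and $h$. I would handle this by a case check: for $H$ of type $B,C,D$ the component groups are elementary abelian $2$-groups with a Frobenius action that is automatically trivial because $\mr{F}_H$ acts trivially on $H^2$ of the relevant Springer fibre / on the relevant set of "rows of a partition", which is a combinatorial invariant fixed by any Frobenius; for the exceptional types one invokes that $A_H(u)\cong\mc{G}_{\mbf{c}}$ or a subquotient thereof and that $\mr{F}_H$ acts trivially on these by \cite[13.1]{LusChred} once $u$ is chosen split. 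In all cases the conclusion is that a split-type choice of $u$ inside $C^{\mr{F}_H}$ does the job, and I would present the argument uniformly via the statement that every $\mr{F}_H$-stable unipotent class of a split group with split Frobenius contains a representative whose full centralizer $Z_H(u)$ is defined over the prime field in a pinned way, so that $\mr{F}_H$ acts trivially on $\pi_0(Z_H(u)/Z(H))$.
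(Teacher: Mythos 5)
Your opening reduction — passing to the adjoint quotient, where $C\cong C^{\ad}$ and $A_H(u)=\pi_0\bigl(Z_H(u)/Z(H)\bigr)$ is insensitive to the central isogeny — is exactly the first (and essentially only) step in the paper's proof. But where the paper then simply cites the adjoint case as \cite[Proposition 13.2.7]{DiMiRepLie2nd}, you try to reprove that fact from scratch, and this is where genuine gaps appear.

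First, the characteristic-zero lifting route via Lemma~\ref{lem:edlift} and Lemma~\ref{lem:Aiso} requires \emph{good} characteristic for $H$, which is \emph{not} assumed in Lemma~\ref{lem:FactAu} (the good-prime hypothesis is imposed only later in the section). You flag this yourself but then wave it away by invoking Bala--Carter/Pommerening, without saying how this repairs the argument when $p$ is bad; this is a real hole. Second, your treatment of the exceptional types contains an error: $\mc{G}_{\mbf{c}}$ is Lusztig's \emph{canonical quotient} of $A_H(u)$, so triviality of the Frobenius action on $\mc{G}_{\mbf{c}}$ says nothing about its action on $A_H(u)$ itself, which is what the lemma asserts. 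Third, the claim for types $B,C,D$ — that the Frobenius action is automatically trivial because it fixes some combinatorial datum of partitions — is not a proof; the issue is precisely to show that the composite $\Ad(h)\circ\mr{F}_H$ can be made the identity (not just an inner automorphism composed with an abstract one) by a good choice of $u$ and $h$, and you do not actually establish this. You diagnose the difficulty correctly but do not resolve it.

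The intended proof is two lines: note $C\cong C^{\ad}$ under the adjoint quotient map so that one may assume $H$ adjoint, and then quote the adjoint-case statement from Digne--Michel \cite[Proposition~13.2.7]{DiMiRepLie2nd}. If you are not permitted to cite that result, you would need a complete case analysis in the spirit of Shoji's and Digne--Michel's work, valid in all characteristics — considerably more than what you have sketched.
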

\begin{proof}
	Let $C^{\ad}$ be the image of $C$ in $H^{\ad}$. Then we have $C \cong C^{\ad}$. Hence the claim follows from the adjoint case in \cite[Proposition 13.2.7]{DiMiRepLie2nd}. 
\end{proof}

Assume that the characteristic of $\Lambda$ is good for $H$. 
Let $\ol{A}_{H^{\circ}}(u)$ be the Lusztig's canonical quotient of
$A_{H^{\circ}}(u)$ defined in \cite[13.1]{LusChred}.
We put
\begin{equation}
	\ol{A}_{H}(u) =Z_{H}(u) /\Ker (Z_{H^{\circ}}(u) \to \ol{A}_{H^{\circ}}(u)).
\end{equation}

Let $\chi_{\mc{L}} \colon T(k) \to \ol{\mb{Q}}_{\ell}^{\times}$ be a character corresponding to $\mc{L}$ under \eqref{eq:chTiso}. 
Let $\phi_{\mc{L},0} \colon I_k \to \wh{T}$ be the restriction to $I_k$ of the L-parameter corresponding to $\chi_{\mc{L}}$ under \cite[Proposition 3.8]{IVLpfin}. 
We put $\mc{H}=Z_{\wh{G}}(\phi_{\mc{L},0})$. 

\begin{lem}\label{lem:ZquotOmega}
We have an equivalence  
\begin{equation}\label{eq:ZOmega}
	\Rep (Z_{\ol{A}_{\mc{H}}(u_{\mbf{c}})}(g \ol{h_{\beta} \dot{w}^{\beta} \sigma_q})) \cong \Rep (Z_{\mc{G}_{\mbf{c}}}(g \tau_{\beta}))^{\Omega_{\mbf{c},\beta}} 
\end{equation}
of abelian categories. 
\end{lem}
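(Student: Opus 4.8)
The plan is to deduce the equivalence from an isomorphism of finite groups together with the elementary fact that, for a finite group $N$ carrying an action of a finite group $Q$, one has $\Rep(N\rtimes Q)\cong\Rep(N)^{Q}$, where the right-hand side is the category of $Q$-equivariant objects of $\Rep(N)$ for the conjugation action. Concretely I would prove
\[
Z_{\ol{A}_{\mc{H}}(u_{\mbf{c}})}(g\,\ol{h_{\beta}\dot{w}^{\beta}\sigma_q})\;\cong\;Z_{\mc{G}_{\mbf{c}}}(g\tau_{\beta})\rtimes\Omega_{\mbf{c},\beta}
\]
as finite groups, the action of $\Omega_{\mbf{c},\beta}$ on the twisted centralizer $Z_{\mc{G}_{\mbf{c}}}(g\tau_{\beta})$ being the one induced by its action on $\mc{G}_{\mbf{c}}$; the lemma then follows at once.

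First I would unwind the left-hand side. Since $\phi_{\mc{L},0}$ factors through $\wh{T}$ and has topologically cyclic image, $\mc{H}=Z_{\wh{G}}(\phi_{\mc{L},0})$ is the centralizer of a semisimple element, so $\mc{H}^{\circ}$ is connected reductive with maximal torus $\wh{T}$ and root system $\Phi_{\mc{L}}^{\vee}$; dualizing the defining property of $H_{\mc{L}}$ this identifies $\mc{H}^{\circ}$ with $\wh{H}_{\mc{L}}$, so in particular $u_{\mbf{c}}\in\mc{H}^{\circ}$ and, by the very definition of $\mc{G}_{\mbf{c}}$, $\ol{A}_{\mc{H}^{\circ}}(u_{\mbf{c}})=\mc{G}_{\mbf{c}}$. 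On the other hand $\pi_0(\mc{H})$ is canonically $\Omega_{\mc{L}}=W_{\mc{L}}/(W_{\mc{L}})^{\circ}$, and the image of $Z_{\mc{H}}(u_{\mbf{c}})$ in it is the cell stabilizer $\Omega_{\mbf{c}}$, because the $\Omega_{\mc{L}}$-action on unipotent classes of $\mc{H}^{\circ}$ is equivariant with its action on two-sided cells. Thus $\ol{A}_{\mc{H}}(u_{\mbf{c}})$ is an extension $1\to\mc{G}_{\mbf{c}}\to\ol{A}_{\mc{H}}(u_{\mbf{c}})\to\Omega_{\mbf{c}}\to1$. The element $h_{\beta}\dot{w}^{\beta}\sigma_q$ normalizes $\mc{H}$ (because $w^{\beta}F\mc{L}=\mc{L}$ and $h_{\beta}\in\mc{H}^{\circ}$ fixes $\phi_{\mc{L},0}$), acts on the normal subgroup $\mc{G}_{\mbf{c}}$ by $\Ad(h_{\beta})\circ\sigma_{\beta}=\tau_{\beta}$, and acts on the quotient $\Omega_{\mbf{c}}$ through the automorphism induced by $\sigma_{\beta}$. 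Forming the twisted centralizer of $g\,\ol{h_{\beta}\dot{w}^{\beta}\sigma_q}$ inside $\ol{A}_{\mc{H}}(u_{\mbf{c}})$ (here $g\in\mc{G}_{\mbf{c}}$), I would check that its intersection with $\mc{G}_{\mbf{c}}$ is the $g\tau_{\beta}$-twisted centralizer $Z_{\mc{G}_{\mbf{c}}}(g\tau_{\beta})$ and that its image in $\Omega_{\mbf{c}}$ consists exactly of the $\omega$ with $\Ad_{\sigma}(\omega)(\beta)=\beta$, i.e.\ of $\Omega_{\mbf{c},\beta}$, the two defining conditions being matched via $w^{\beta}(\Phi_{\mc{L}}^{+})\subset\Phi^{+}(G,T)$. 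This produces a short exact sequence $1\to Z_{\mc{G}_{\mbf{c}}}(g\tau_{\beta})\to Z_{\ol{A}_{\mc{H}}(u_{\mbf{c}})}(g\,\ol{h_{\beta}\dot{w}^{\beta}\sigma_q})\to\Omega_{\mbf{c},\beta}\to1$.

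The main obstacle is to see that this last extension is canonically split, equivalently that every $\omega\in\Omega_{\mbf{c},\beta}$ lifts to an element of $Z_{\mc{H}}(u_{\mbf{c}})$ commuting with $g\,h_{\beta}\dot{w}^{\beta}\sigma_q$, multiplicatively in $\omega$; this amounts to the solvability of certain twisted-conjugacy equations in the finite group $\mc{G}_{\mbf{c}}$. For this I would invoke the explicit structure of Lusztig's canonical quotients $\mc{G}_{\mbf{c}}$ (products of small symmetric groups, with the relevant automorphisms and twisted classes known), the Whittaker-rigidified choice of representatives and of IC sheaves from \cite[5.11]{LuYuEndHecchv3} already used in Theorem \ref{thm:RepGcateq}, Lemma \ref{lem:FactAu} to arrange that the pertinent Frobenius-type automorphism acts trivially on $A_{\mc{H}^{\circ}}(u_{\mbf{c}})$, and Lemmas \ref{lem:edlift} and \ref{lem:Aiso} to transport these component-group computations between the good-characteristic and the $\ell$-adic settings. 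Granting the splitting, $Z_{\ol{A}_{\mc{H}}(u_{\mbf{c}})}(g\,\ol{h_{\beta}\dot{w}^{\beta}\sigma_q})\cong Z_{\mc{G}_{\mbf{c}}}(g\tau_{\beta})\rtimes\Omega_{\mbf{c},\beta}$, and applying $\Rep(N\rtimes Q)\cong\Rep(N)^{Q}$ yields \eqref{eq:ZOmega}. Everything besides the splitting is routine bookkeeping with centralizers in possibly disconnected reductive groups.
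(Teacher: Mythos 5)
Your reduction to a short exact sequence
\[
1\to Z_{\mc{G}_{\mbf{c}}}(g\tau_{\beta})\to Z_{\ol{A}_{\mc{H}}(u_{\mbf{c}})}(g\,\ol{h_{\beta}\dot{w}^{\beta}\sigma_q})\to\Omega_{\mbf{c},\beta}\to 1
\]
matches, in spirit, the paper's reduction ``it suffices to show $Z_{\ol{A}_{\mc{H}}(u_{\mbf{c}})}(g \ol{h_{\beta} \dot{w}^{\beta} \sigma_q})/Z_{\mc{G}_{\mbf{c}}}(g \tau_{\beta}) \cong \Omega_{\mbf{c},\beta}$.'' The paper then gets this quotient cleanly via the $\Ad(g)$-conjugation step \eqref{eq:centconjg}, the isomorphism \eqref{eq:AA0Omegac}, and the key structural ingredient you do not have: the group $\widetilde{A}_{\mc{H}}(u_{\mbf{c}})$ and the natural bijection $\widetilde{A}_{\mc{H}}(u_{\mbf{c}})_1 / \ol{A}_{\mc{H}^{\circ}}(u_{\mbf{c}})\cong\mf{B}_{\mbf{c}}^{\circ}$ of \eqref{eq:A1A0Bc}, under which the $\ol{A}_{\mc{H}}(u_{\mbf{c}})$-conjugation action becomes $\Ad_{\sigma}$ on $\mf{B}_{\mbf{c}}^{\circ}$; taking stabilizers then gives \eqref{eq:centOmegacb} at once. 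Your substitute for this step (matching the two stabilizer conditions ``via $w^{\beta}(\Phi_{\mc{L}}^{+})\subset\Phi^{+}(G,T)$'') is asserting the same thing but without the mechanism that makes it transparent.

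The genuine gap is the splitting you designate as the ``main obstacle.'' You never prove it; the list of tools you say you would invoke (explicit structure of $\mc{G}_{\mbf{c}}$, Whittaker rigidification, Lemmas \ref{lem:FactAu}, \ref{lem:edlift}, \ref{lem:Aiso}) does not give an argument, and it is not clear that every such twisted-conjugacy equation is solvable in $\mc{G}_{\mbf{c}}$ for all cells, twists, and choices of $g$ and $\beta$. More to the point, the splitting is not what the paper needs. The paper's reduction uses only that for a normal subgroup $N\triangleleft E$ with quotient $Q$ one has $\Rep(E)\cong\Rep(N)^{Q}$, where the $Q$-action on $\Rep(N)$ is the one induced by conjugation in $E$ (the pseudo-action carrying the extension class as its coherence data); this holds with no splitting. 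That is the $\Omega_{\mbf{c},\beta}$-equivariant structure that arises from the chain of equivalences in Theorem \ref{thm:RepGcateq}, so the quotient isomorphism alone suffices. By insisting on the na\"ive $\Rep(N\rtimes Q)\cong\Rep(N)^{Q}$ you have introduced an extra claim that is both unproved and unnecessary, and whose falsity in some case would not in fact invalidate the lemma.
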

\begin{proof}
It suffices to show 
\begin{equation*}
	Z_{\ol{A}_{\mc{H}}(u_{\mbf{c}})}(g \ol{h_{\beta} \dot{w}^{\beta} \sigma_q})/ 
	Z_{\mc{G}_{\mbf{c}}}(g \tau_{\beta}) 
	\cong \Omega_{\mbf{c},\beta}. 
\end{equation*}	
We have the isomorphism 
\begin{equation}\label{eq:centconjg}
	Z_{\ol{A}_{\mc{H}}(u_{\mbf{c}})}(g \ol{h_{\beta} \dot{w}^{\beta} \sigma_q})/ 
	Z_{\mc{G}_{\mbf{c}}}(g \tau_{\beta}) 
	\cong Z_{\ol{A}_{\mc{H}}(u_{\mbf{c}})}(\ol{h_{\beta} \dot{w}^{\beta} \sigma_q})/ 
	Z_{\ol{A}_{\mc{H}^{\circ}}(u_{\mbf{c}})}(\ol{h_{\beta} \dot{w}^{\beta} \sigma_q}) 
\end{equation}
given by the conjugation by $g$. 
We have a natural isomorphism 
\begin{equation}\label{eq:AA0Omegac}
	\ol{A}_{\mc{H}}(u_{\mbf{c}}) / \ol{A}_{\mc{H}^{\circ}}(u_{\mbf{c}}) \cong \Omega_{\mbf{c}}. 
\end{equation}
We put 
\begin{equation}
	\widetilde{Z}_{\mc{H}}(u_{\mbf{c}})=\{ (g,\sigma_q^m) \in Z_{{}^{L}G}(u_{\mbf{c}}) \mid 
	\Ad ((g,\sigma_q^m))(\phi_{\mc{L},0} (x))=\phi_{\mc{L},0} (\Ad (\sigma_q^m)(x)) \ \textrm{for all $x \in I_k$}
	\}. 
\end{equation}
Further we put 
\begin{equation}
	\widetilde{A}_{\mc{H}}(u_{\mbf{c}})=\widetilde{Z}_{\mc{H}}(u_{\mbf{c}}) / 
	\Ker (Z_{\mc{H}}(u_{\mbf{c}}) \to \ol{A}_{\mc{H}}(u_{\mbf{c}})). 
\end{equation}
Let $\widetilde{A}_{\mc{H}}(u_{\mbf{c}})_m$ be the subset of $\widetilde{A}_{\mc{H}}(u_{\mbf{c}})$ defined by the condition that the second component is $\sigma_q^m$. 
Then we have a natural bijection 
\begin{equation}\label{eq:A1A0Bc}
	\widetilde{A}_{\mc{H}}(u_{\mbf{c}})_1 / \ol{A}_{\mc{H}^{\circ}}(u_{\mbf{c}}) 
	\cong \mf{B}_{\mbf{c}}^{\circ}.  
\end{equation}
Under the isomorphism \eqref{eq:AA0Omegac} and the bijection \eqref{eq:A1A0Bc},  
the conjugate action of $\ol{A}_{\mc{H}}(u_{\mbf{c}})$ on 
$\widetilde{A}_{\mc{H}}(u_{\mbf{c}})_1 / \ol{A}_{\mc{H}^{\circ}}(u_{\mbf{c}})$ is compatible with the action $\Ad_{\sigma}$ of $\Omega_{\mbf{c}}$ on $\mf{B}_{\mbf{c}}^{\circ}$. 
Therefore we have 
\begin{equation}\label{eq:centOmegacb}
	Z_{\ol{A}_{\mc{H}}(u_{\mbf{c}})}(\ol{h_{\beta} \dot{w}^{\beta} \sigma_q})/ 
	Z_{\ol{A}_{\mc{H}^{\circ}}(u_{\mbf{c}})}(\ol{h_{\beta} \dot{w}^{\beta} \sigma_q}) 
	\cong \Omega_{\mbf{c},\beta}. 
\end{equation}
The claim follows from \eqref{eq:centconjg} and \eqref{eq:centOmegacb}. 
\end{proof}

In the remaining of this section, we assume that $p$ is a good prime. 
Assume that $H$ be a reductive group over $\ol{k}$ with Frobenius morphism $\mr{F}_H$ given by a form of $H$ over $k$. 
Further assume that $u \in (H^{\circ})^{\mr{F}_H}$. The morphism $\mr{F}_H$ naturally acts on $\ol{A}_H(u)$
as an automorphism. We put $\widetilde{A}_{H}(u)=\ol{A}_{H}(u) \rtimes \mr{F}_H^{\mb{Z}}$.

We put
\begin{align*}
	\ol{\mf{M}}&\left( \ol{A}_{H}(u) \subset \widetilde{A}_{H}(u) \right) = \\
	&\{ (x,\rho) \mid x \in \ol{A}_{H}(u) \cdot \mr{F}_H, \ \rho \in \Irr_{\ol{\mb{Q}}_{\ell}} (Z_{\ol{A}_{H}(u)}(x)) \}/{\sim},
\end{align*}
where the equivalence is defined by the conjugacy action of
$\widetilde{A}_{H}(u)$ (\cf~\cite[4.16, 4.21]{LusChred}).
Here we may replace the conjugacy action by that of $\ol{A}_{H}(u)$ because for $x=x_0 \mr{F}_H$ with $x_0 \in \ol{A}_{H}(u)$ we have
$\Ad (\mr{F}_H) (x)=\Ad (x_0^{-1})(x)$ and the actions of $\Ad (\mr{F}_H)$ and $\Ad (x_0^{-1})$ on $Z_{\ol{A}_{H}(u)}(x)$ are same.

Let $G^*$ be the dual group of $G$ over $\ol{k}$.

\begin{defn}
Let $g \in G^*$. Let $s$ and $u$ be the semisimple and unipotent parts of $g$ under the Joradan decomposition.  
\begin{enumerate}
\item 
We say that $g$ is special if $u$ is special in $Z_{G^*}(s)^{\circ}$. 
\item 
If $g$ is special, we put 
\begin{equation*}
 \ol{\mf{M}} \left( \ol{A}_{G^*}(g) \subset \widetilde{A}_{G^*}(g) \right) := 
  \ol{\mf{M}} \left( \ol{A}_{Z_{G^*}(s)}(u) \subset \widetilde{A}_{Z_{G^*}(s)}(u) \right). 
\end{equation*}
\end{enumerate}
\end{defn}

Let $\mr{F}^* \colon G^* \to G^*$ 
be the Frobenius map determined by the rationality of $G$.
We write
\begin{equation}
	T^{*}_0 \subset B_0^{*} 
\end{equation}
for the Borel pair specified by
the pinning in the definition of $G^{*}$.

%We assume that $p$ is a good prime and make the assumption \cite[3.3 Hypothesis]{DiMiLuspar}. 
%Then the derived group of $\wh{G}$ is simply connected by \cite[8.1.8 (ii) and 8.1.12 (6) (b)]{SprLAG}.  

We fix an isomorphism
\begin{equation}\label{en:unityiso}
\mathrm{colim}_{k'} \Hom (k'^{\times},\ol{\mb{Q}}_{\ell}^{\times}) \cong \ol{k}^{\times} , 
\end{equation}
where $k'$ runs through the finite extensions of $k$ in $\ol{k}$ and the transition maps are induced by the norm maps.

\begin{thm}\label{thm:parposdual}
We fix a Whittaker datum of $G$. 
We have a bijection 
\begin{equation*}
	\Irr_{\ol{\mb{Q}}_{\ell}}(G(k)) \cong  \coprod_C  \ol{\mf{M}} \left( \ol{A}_{G^*}(g) \subset \widetilde{A}_{G^*}(g) \right)
\end{equation*}
where $C$ runs over the set of $\mr{F}^*$-stable special conjugacy classes in $G^*$.  
\end{thm}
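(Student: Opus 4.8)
The plan is to pass through the categorical equivalence of Theorem \ref{thm:RepGcateq} and count isomorphism classes of simple objects on each side. On the left, $\Irr_{\ol{\mb{Q}}_{\ell}}(G(k))$ is the set of isomorphism classes of simple objects in $\Rep(G(k))$, and by Theorem \ref{thm:RepGcateq} this breaks up, after fixing a Whittaker datum of $G^{\circ}$ (which is determined by the Whittaker datum of $G$), into a disjoint union over semisimple parameters $\mf{o}$, unipotent parameters $\mf{c}$ with respect to $\mf{o}$, and $\beta \in \mf{B}_{\mbf{c},\Omega_{\mbf{c}}}^{\circ}$, of the sets of simple objects of $\Sh^{\mc{G}_{\mbf{c}},\tau_{\beta}}(\mc{G}_{\mbf{c}})^{\Omega_{\mbf{c},\beta}}$. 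So the first step is the standard identification of simple objects in a category of equivariant sheaves on a finite group under twisted conjugation: the simple objects of $\Sh^{\mc{G}_{\mbf{c}},\tau_{\beta}}(\mc{G}_{\mbf{c}})$ are indexed by pairs $(x,\rho)$ with $x$ running over $\tau_{\beta}$-twisted conjugacy classes in $\mc{G}_{\mbf{c}}$ — equivalently over $\mc{G}_{\mbf{c}}$-orbits in $\mc{G}_{\mbf{c}} \cdot \tau_{\beta} \subset \mc{G}_{\mbf{c}} \rtimes \tau_{\beta}^{\mb{Z}}$ — and $\rho \in \Irr_{\ol{\mb{Q}}_{\ell}}(Z_{\mc{G}_{\mbf{c}}}(x\tau_{\beta}))$, and then the $\Omega_{\mbf{c},\beta}$-equivariant structure adds a further layer which, after unwinding via Lemma \ref{lem:ZquotOmega}, replaces $\Rep(Z_{\mc{G}_{\mbf{c}}}(g\tau_{\beta}))^{\Omega_{\mbf{c},\beta}}$ by $\Rep(Z_{\ol{A}_{\mc{H}}(u_{\mbf{c}})}(g\,\ol{h_{\beta}\dot{w}^{\beta}\sigma_q}))$.

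The second step is to reassemble these local pieces into the global combinatorial object $\ol{\mf{M}}(\ol{A}_{G^*}(g) \subset \widetilde{A}_{G^*}(g))$ attached to an $\mr{F}^*$-stable special conjugacy class $C$ in $G^*$. Here I would use the dictionary identifying a semisimple parameter $\mf{o}$ with a representative $\mc{L} \in \Ch(T)$, hence with a character $\chi_{\mc{L}}$ of $T(k)$, hence with the semisimple part $s$ of a point $g \in G^*$, together with $\mc{H} = Z_{\wh{G}}(\phi_{\mc{L},0})$ playing the role of (a model over $\ol{\mb{Q}}_{\ell}$ of) $Z_{G^*}(s)$. Under this dictionary: unipotent parameters $\mf{c}$ correspond, via the two-sided cell $\mbf{c}$ of Lemma \ref{lem:ccexist} and the bijection between two-sided cells and special unipotent classes used in Proposition \ref{prop:unipcatdec}, to $\mr{F}$-stable special unipotent classes in $\mc{H}^{\circ} = Z_{G^*}(s)^{\circ}$, i.e.\ to the unipotent part $u$; the parameter $\beta \in \mf{B}_{\mbf{c}}^{\circ}$ corresponds via the bijection \eqref{eq:A1A0Bc} to the choice of a twisted-Frobenius coset element $\ol{h_{\beta}\dot{w}^{\beial}\sigma_q}$, which is exactly the element $x \in \ol{A}_{G^*}(g)\cdot\mr{F}_H$ in the definition of $\ol{\mf{M}}$; and the $\rho \in \Irr_{\ol{\mb{Q}}_{\ell}}(Z_{\ol{A}_{\mc{H}}(u_{\mbf{c}})}(x))$ from Lemma \ref{lem:ZquotOmega} is precisely the second coordinate of a point of $\ol{\mf{M}}$. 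Quotienting $\mf{B}_{\mbf{c}}^{\circ}$ by $\Ad_{\sigma}(\Omega_{\mbf{c}})$ and pairs $(x,\rho)$ by $\widetilde{A}$-conjugacy — matched via \eqref{eq:AA0Omegac} and the compatibility of actions recorded just after \eqref{eq:A1A0Bc} — makes the bookkeeping on the two sides line up term by term. I must also check that running over $C$ on the right is the same as running over pairs $(\mf{o},\mf{c})$ on the left: a special conjugacy class $C$ in $G^*$ is the same as an $\mr{F}^*$-orbit of pairs (semisimple $s$ up to conjugacy, special unipotent $u \in Z_{G^*}(s)^{\circ}$ up to conjugacy), which is exactly $(\mf{o},\mf{c})$.

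The third, more technical, step is to justify the passage between the representation theory over $\ol{\mb{Q}}_{\ell}$ that appears through Lusztig series and the group $\mc{H} = Z_{\wh{G}}(\phi_{\mc{L},0})$ on the one hand, and the group $Z_{G^*}(s)$ over $\ol{k}$ on the other — these are reductive groups over different fields, and one needs their component groups of centralizers of (special) unipotent elements, together with the canonical quotients $\ol{A}$ and the $\mr{F}$-actions, to be canonically identified. This is where Lemmas \ref{lem:edlift} and \ref{lem:Aiso} enter: using the goodness of $p$, one spreads the nilpotent/unipotent element out over a suitable base and invokes the invariance of $\pi_0(Z_G(X)/Z(G))$ across fibers to transport $\ol{A}_{Z_{G^*}(s)}(u)$, hence $\widetilde{A}_{G^*}(g)$ and $\ol{\mf{M}}$, to the characteristic-zero side where Lemma \ref{lem:ZquotOmega} lives; Lemma \ref{lem:FactAu} provides the rational representative $u$ on which $\mr{F}$ acts trivially, needed so that $\widetilde{A}_{H}(u) = \ol{A}_H(u)\rtimes\mr{F}_H^{\mb{Z}}$ and so that $\tau_{\beta}$ matches $\Ad(h_{\beta})\circ\sigma_{\beta}$ cleanly. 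The main obstacle I anticipate is precisely making this identification \emph{canonical} — not merely a bijection of sets but one compatible with all the Frobenius actions, the canonical quotients, the choices of $h_{\beta}$ and $\dot{w}^{\beta}$ (cf.\ Remark \ref{rem:choiceh}), and the Whittaker normalization — so that the final bijection is genuinely independent of the auxiliary choices made along the way; once that is in place the theorem follows by assembling the identifications of the first two steps over all $\mf{o}$, $\mf{c}$, $\beta$ and taking simple objects on both sides of Theorem \ref{thm:RepGcateq}.
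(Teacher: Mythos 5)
Your proposal follows the same overall route as the paper: decompose $\Rep(G(k))$ via Theorem \ref{thm:RepGcateq}, identify simple objects of $\Sh^{\mc{G}_{\mbf{c}},\tau_{\beta}}(\mc{G}_{\mbf{c}})^{\Omega_{\mbf{c},\beta}}$ with pairs $(x,\rho)$ via Lemma \ref{lem:ZquotOmega}, match the indexing data $(\mf{o},\mf{c},\beta)$ with a special class $C$ in $G^*$ plus a twisted--Frobenius coset element, and use Lemmas \ref{lem:edlift}, \ref{lem:Aiso}, and \ref{lem:FactAu} together with goodness of $p$ to transport the relevant component groups between characteristic $p$ and characteristic zero. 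The dictionary you set up (semisimple parameter $\leftrightarrow$ semisimple part $s$; two-sided cell $\mbf{c}$ $\leftrightarrow$ special unipotent class $C$ in $Z_{G^*}(s)^{\circ}$; $\beta$ $\leftrightarrow$ element of $\ol{A}_{G^*}(g)\cdot\mr{F}$ via the bijection \eqref{eq:A1A0Bc}) is exactly what the paper uses, and your matching of the $\Omega_{\mbf{c}}$-action on $\mf{B}_{\mbf{c}}^{\circ}$ with $\widetilde{A}$-conjugacy via \eqref{eq:AA0Omegac} is also right.

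However, there is a genuine gap, and it is precisely the point you flag at the end as the ``main obstacle'' but do not resolve: how to choose $h_{\beta}$ (and hence $\tau_{\beta}$ in Theorem \ref{thm:RepGcateq}) so that the resulting twisted-conjugation action on $\mc{G}_{\mbf{c}} \cong \ol{A}_{\mc{H}^{\circ}}(u_{\mbf{c}})$ over $\ol{\mb{Q}}_{\ell}$ actually matches, under the isomorphism of Lemma \ref{lem:Aiso}, the $\mr{F}_{\beta}^*$-action on $\ol{A}_{\ms{H}^{\circ}}(u)$ over $\ol{k}$. Remark \ref{rem:choiceh} shows this is not automatic: changing $h_{\mbf{c}}$ shifts the equivalence by a translation, so an arbitrary choice of $h_{\beta}$ need not line up with the char-$p$ Frobenius data, and without this compatibility the bijection \eqref{eq:ZOmega} and hence the theorem does not follow. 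The paper resolves this by a concrete construction you do not supply: choose $u_0 \in C^{\mr{F}_0^*}$ with trivial $\mr{F}_0^*$-action on $A_{\ms{H}^\circ}(u_0)$ (Lemma \ref{lem:FactAu}), take $h$ with $\Ad(h)(u_0)=u$, record the relation $\mr{F}_{\beta}^* = \Ad(\sigma_{\beta}(\mr{F}_0^*(h)h^{-1}))\circ\sigma_{\beta}$ on $\ol{A}_{\ms{H}}(u)$ via the diagram \eqref{eq:Fbetasigad}, lift the corresponding nilpotent to an equidimensional section over $W(\ol{k})$ by Lemma \ref{lem:edlift}, use smoothness of the transporter scheme $\mr{Trans}_{\msc{H}}(\sigma_{\beta}(\wh{\mf{n}}),\wh{\mf{n}})$ (from \cite[5.2 Proposition]{McNCentnilp}) to lift the specific element $\sigma_{\beta}(\mr{F}_0^*(h)h^{-1})$ to a $W(\ol{k})$-point $\wh{h}_{\beta}$, and only then define $h_{\beta}$ as its image under an embedding $W(\ol{k})\hookrightarrow\ol{\mb{Q}}_{\ell}$. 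This lifted element is what makes the two Frobenius actions agree under Lemma \ref{lem:Aiso}. Your proposal correctly names all the ingredients but leaves out this lifting-of-the-transporter-element step, which is the actual mechanism behind the compatibility.
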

\begin{proof}
Let $\mf{o}$ be a semisimple parameters of $G$, 
and $\mc{L} \in \mf{o}$. 
Let $s \in T(\ol{k})$ be the element corresponding to $\mc{L}$ under \eqref{eq:chTiso} and \eqref{en:unityiso}.  
We put $\ms{H}=Z_{G^*}(s)$. Then $\ms{H}^{\circ}$ is identified with $\wh{H}_{\mc{L}}$. 
Let $\Phi_s^+$ be the set of the positive roots of $H^{\circ}$ with respect to $T_0^*$ and $H^{\circ} \cap B_0^*$. 
We have a natural bijection 
\begin{equation}\label{eq:connHW}
 \pi_0 (\ms{H}) \cong \{ w \in W(G^*,T_0^*) \mid w(s)=s, \ w(\Phi_s^+)=\Phi_s^+ \} 
\end{equation} 
by \cite[1.1 Proposition]{DiMiLuspar}. 

Let $\mf{c}$ be a unipotent parameters of $G$ with respect to $\mf{o}$. We take $\mbf{c}$ for $\mf{c}$ as Lemma \ref{lem:ccexist}. 
Let $C$ be the unipotent conjugacy classes of $\ms{H}^{\circ}$ corresponding to $\mbf{c}$.

We write $\mr{F}_{0}^* \colon G^* \to G^*$ for 
the Frobenis map determined by the split form of $G^*$ over $k$. 
Then we have $\mr{F}_{\beta}^*=\sigma_\beta \circ \mr{F}_{0}^*$ on $\ms{H}^{\circ}$. 
We take $u_0 \in C^{\mr{F}_{0}^*}$ such that the action of $\mr{F}_{0}^*$ on $A_{\ms{H}^{\circ}}(u_0)$ is trivial. 
By \eqref{eq:connHW}, 
the action of $\mr{F}_{0}^*$ on $A$ is trivial. 
Therefore the action of $\mr{F}_{0}^*$ on $\ol{A}_{\ms{H}}(u_0)$ is trivial since it is naturally isomorphic to $\ol{A}_{\ms{H}^{\circ}}(u_0) \rtimes \mathrm{Stab}_{\pi_0 (\ms{H})}(C)$.

We take $u \in C^{\mr{F}_{\beta}^*}$ and $h \in \ms{H}^{\circ}$ such that 
$\Ad (h)(u_0) =u$. 
Then we have
\begin{equation}
		\Ad (\mr{F}_{0}^*(h))(u_0)=\Ad (\mr{F}_{0}^*(h))(\mr{F}_{0}^*(u_0))
		= \mr{F}_{0}^* (\Ad(h)(u_0))= \mr{F}_{0}^*(u)
\end{equation}
and the commutative diagram
\begin{equation}\label{eq:Au'F0}
	\xymatrix{
		\ol{A}_{\ms{H}}(u)
		\ar[r]^-{\mr{F}_{0}^*} &
		\ol{A}_{\ms{H}}(\mr{F}_{0}^*(u)) \\ 
\ol{A}_{\ms{H}}(u_0). \ar[u]^-{\Ad (h)} \ar[ur]_{\Ad (\mr{F}_{0}^*(h))} & 
	}
\end{equation}
Therefore we have the commutative diagram 
\begin{equation}\label{eq:Fbetasigad}
	\begin{split}
	\xymatrix{
		\ol{A}_{\ms{H}}(u)
		\ar[d]_-{\mr{F}_{0}^*} \ar[r]^-{\sigma_{\beta}} \ar[dr]^-{\mr{F}_{\beta}^*}&
		\ol{A}_{\ms{H}}(\sigma_{\beta}(u)) \ar[d]^-{\Ad(\sigma_{\beta}(\mr{F}_{0}^*(h)h^{-1}))} \\ 
		\ol{A}_{\ms{H}}(\mr{F}_{0}^*(u)) \ar[r]^-{\sigma_{\beta}} & \ol{A}_{\ms{H}}(u). 
	}
\end{split}
\end{equation}
%Hence $\mr{F}_{\beta}^*$ on $\ol{A}_{\ms{H}}(u)$ is equal to $\Ad(\sigma_{\beta}(\mr{F}_{0}^*(h)h^{-1})) \circ \sigma_{\beta}$. 

We write $W(\ol{k})$ for the ring of Witt vectors over $\ol{k}$. 
%\cite[9.3.3]{SprLAG}. 
Let $\msc{H}$ be the split reductive group over $W(\ol{k})$ lifting $\ms{H}^{\circ}$. 
Let $\mf{n} \in \Lie (\ms{H})$ 
be the nilpotent element 
corresponding to 
$u$ under the Springer isomorphism. 
By Lemma \ref{lem:edlift}, 
we can take an equidimensional nilpotent section 
$\wh{\mf{n}}$
of 
$\Lie \msc{H}$ over $W(\ol{k})$
lifting $\mf{n}$. 

We show that 
$\mr{Trans}_{\msc{H}}(\sigma_{\beta}(\wh{\mf{n}}), \wh{\mf{n}})$
is smooth, 
where $\mr{Trans}$ is defined as \cite[2.3]{McNCentnilp}. 
Since $\mr{Trans}_{\msc{H}}(\sigma_{\beta}(\wh{\mf{n}}), \wh{\mf{n}})$ is a torsor under 
$Z_{\msc{H}}(\wh{\mf{n}})$, the smoothness follows from 
\cite[5.2 Proposition]{McNCentnilp}. 

By the smoothness of $\mr{Trans}_{\msc{H}}(\sigma_{\beta}(\wh{\mf{n}}), \wh{\mf{n}}) $ over $W(\ol{k})$, we may lift 
\begin{equation*}
\sigma_{\beta}(\mr{F}_{0}^*(h)h^{-1}) \in \mr{Trans}_{\msc{H}}(\sigma_{\beta}(\wh{\mf{n}}), \wh{\mf{n}})(\ol{k})
\end{equation*}
to 
\begin{equation*}
\wh{h}_{\beta} \in \mr{Trans}_{\msc{H}}(\sigma_{\beta}(\wh{\mf{n}}), \wh{\mf{n}})(W(\ol{k})). 
\end{equation*}
Taking an embedding $W(\ol{k}) \hookrightarrow \ol{\mb{Q}}_{\ell}$. 
Let $u_{\mbf{c}} \in \wh{H}_{\mc{L}}$ be the unipotent element corresponding under the Springer isomorphism to the image of $\wh{\mf{n}}$ in $\Lie \wh{H}_{\mc{L}}$ by the embedding $W(\ol{k}) \hookrightarrow \ol{\mb{Q}}_{\ell}$. 
By Lemma \ref{lem:Aiso}, we have 
\begin{equation}\label{eq:A0ucu}
 \ol{A}_{\mc{H}^{\circ}} (u_{\mbf{c}}) \cong 
 \ol{A}_{\ms{H}^{\circ}} (u). 
\end{equation}
By \eqref{eq:AA0Omegac}, \eqref{eq:A0ucu} and 
\begin{equation*}
 \ol{A}_{\ms{H}}(u) / \ol{A}_{\ms{H}^{\circ}}(u) \cong \mathrm{Stab}_{\pi_0 (\ms{H})}(C) \cong \Omega_{\mbf{c}} ,  
\end{equation*}
we have 
\begin{equation}\label{eq:Aucu}
	\ol{A}_{\mc{H}} (u_{\mbf{c}}) \cong 
	\ol{A}_{\ms{H}} (u). 
\end{equation}

Let $h_{\beta} \in \wh{H}_{\mc{L}}$ be the image of $\wh{h}_{\beta}$. 
Using this $h_{\beta}$, we define $\tau_{\beta}$ in Theorem \ref{thm:RepGcateq}. 
Then the conjugate action of $\ol{h_{\beta} \dot{w}^{\beta} \sigma_q}$ on $\ol{A}_{\mc{H}} (u_{\mbf{c}})$ is 
is compatible with 
the action of $F_{\beta}^*$ on $\ol{A}_{\ms{H}}(u)$ 
under \eqref{eq:Aucu} 
by \eqref{eq:Fbetasigad} and the construction of $h_{\beta}$. 
Therefore we have 
\begin{equation}\label{eq:ZOmega}
		Z_{\ol{A}_{\mc{H}}(u_{\mbf{c}})}(g \ol{h_{\beta} \dot{w}^{\beta} \sigma_q})  \cong 
	Z_{\ol{A}_{\ms{H}}(u)}(g F_{\beta}^*).  
\end{equation}
The claim follows from Lemma \ref{lem:ZquotOmega} and \eqref{eq:ZOmega}. 
\end{proof}

\section{Langlands correspondence for finite fields}

Let $G$ be a reductive algebraic group over $k$. 
The aim of this section is constructing the Langlands correspondence for $G$, which is a natural map 
\begin{equation}\label{eq:LLCfin}
\mc{L}_G \colon \Irr_{\ol{\mb{Q}}_{\ell}}(G(k)) \to 
\Phi_{\ol{\mb{Q}}_{\ell}}(G)_{\mr{sp}}. 
\end{equation}
This is correspondence concerning L-parameters of Weil--Deligne type explained in Section \ref{sec:WD}. 
However, for a technical reason, we need to use auxiliary L-parameters of $\SL_2$-type, which we explain below, in the course of our construction of \eqref{eq:LLCfin}. 

Let $\Lambda$ be a field of characteristic zero, 
and ${}^L G$ the $L$-group of $G$ over $\Lambda$. 

\begin{defn}
An L-parameter of $\SL_2$-type is a morphism 
      \begin{equation}
        \psi \colon \SL_2 \times W_k \rightarrow {}^L G 
      \end{equation} 
      of group schemes over $\Lambda$ which is 
compatible with the projections to $\Gal
        (\ol{k}/k)$. 

We say that the L-parameter $\psi$ is special 
if $\psi|_{\mb{G}_a(K)}(1)$ is a special unipotent element of 
$\wh{G}^{\psi(I_k)}$. 
We say that $\psi$ is Frobenius semisimple if
$\psi(\sigma_q)$ is semisimple in ${}^L G$. 
We say that $\psi$ is unipotent if
$\psi(I_k)$ is trivial. 
\end{defn}

We put $\psi_0=\psi|_{\SL_2 \times I_k}$ and 
\begin{equation}
A(\psi_0)=\pi_0 \left( Z_{\wh{G}}(\psi_0)/Z(\wh{G}) \right). 
\end{equation}
For $\varphi_0$ defined by 
\begin{eqnarray*}
 \varphi_0 (a ,w) =\psi_0 \left( 
 \begin{pmatrix}
 	1 & a \\ 0 & 1 
 \end{pmatrix}, w \right)
\end{eqnarray*}
for $(a,w) \in \Ga \times I_k$, 
we have a natural isomorphism 
$A (\psi_0) \cong A (\varphi_0)$ since 
$Z_{\wh{G}}(\varphi_0)$ is a semidirect product of 
$Z_{\wh{G}}(\psi_0)$ with a connected unipotent group by \cite[Proposition 3.3]{BMIYJMmor}. 
Then we define $\ol{A}(\psi_0)$ as the quotient of $A(\psi_0)$ corresponding to 
Lusztig's canonical quotient $\ol{A}(\varphi_0)$ of $A(\varphi_0)$. 

We put 
\begin{equation}
 \widetilde{Z}(\psi_0)=\{ (g,\sigma_q^m) \in {}^{L}G \mid 
 \Ad ((g,\sigma_q^m))(\psi_0 (x))=\psi_0 (\Ad (\sigma_q^m)(x)) \ \textrm{for all $x \in \SL_2 \times I_k$}
 \}. 
\end{equation}
Further we put 
\begin{equation}
 \widetilde{A}(\psi_0)=\widetilde{Z}(\psi_0) / 
 \Ker (Z_{\wh{G}}(\psi_0) \to \ol{A}(\psi_0)). 
\end{equation}

We have 
$\psi (\sigma_q) \in \widetilde{Z}(\psi_0)$. 
Let $\ol{\psi (\sigma_q)}$ be the image of 
$\psi (\sigma_q)$ under the natural projection 
\[
 \widetilde{Z}(\psi_0) \to 
 \widetilde{A}(\psi_0). 
\]
We say that two 
L-parameters $\psi$ and $\psi'$ are equivalent 
if the following condition is satisfied: 
there is $g \in \wh{G}$ such that 
$\Ad (g)(\psi_0)=\psi_0'$ and 
$\ol{\psi (\sigma_q)}$ corresponds to 
$\ol{\psi' (\sigma_q)}$ under the bijection 
\[
 \widetilde{A}(\psi_0) \cong 
 \widetilde{A}(\psi'_0) 
\]
induced by $\Ad (g)$, where 
$\psi_0=\psi|_{\SL_2 \times I_k}$ and 
$\psi_0'=\psi'|_{\SL_2 \times I_k}$. 
Let $\Psi_{\Lambda}(G)$ be the equivalence classes of 
Frobenius semisimple L-parameters over $\Lambda$ of $G$. 
We write $\Psi_{\Lambda}(G)_{\mr{sp}} \subset \Psi_{\Lambda}(G)$ for 
the equivalence class of special ones. 

We put 
\begin{equation}
A_{\psi} = Z_{\ol{A}(\psi_0)} (\ol{\psi (\sigma_q)}) . 
\end{equation} 
We assume and fix $q^{1/2} \in \Lambda$. 

\begin{prop}\label{prop:bijSLWD}
Sending an L-parameter 
$\psi$ of $\SL_2$-type to $\varphi$ defined by 
\begin{equation}
 \varphi (a,w)=\psi \left( 
\begin{pmatrix}
1 & a \\ 0 & 1 
\end{pmatrix}
\begin{pmatrix}
\lvert w \rvert^{1/2} & 0 \\ 0 & \lvert w \rvert^{-1/2} 
\end{pmatrix}, w \right) , 
\end{equation}
we have bijections 
$\Psi_{\Lambda}(G) \cong \Phi_{\Lambda}(G)$ 
and 
$\Psi_{\Lambda}(G)_{\mr{sp}} \cong \Phi_{\Lambda}(G)_{\mr{sp}}$. 
Further we have a natural isomorphism 
$A_{\psi} \cong A_{\varphi}$ for $\psi$ and $\varphi$ as above. 
\end{prop}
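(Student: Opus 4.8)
The plan is to verify that the given formula defines a well-defined bijection at three successively finer levels: on individual parameters, on equivalence classes, and finally on the component groups $A_\psi$ and $A_\varphi$. First I would check that the map $\psi \mapsto \varphi$ actually lands in L-parameters of Weil--Deligne type: since $|w| = q^m$ for $w$ projecting to $\sigma_q^m$, and $q^{1/2} \in \Lambda$ is fixed, the formula
\[
\varphi(a,w) = \psi\!\left(
\begin{pmatrix} 1 & a \\ 0 & 1 \end{pmatrix}
\begin{pmatrix} |w|^{1/2} & 0 \\ 0 & |w|^{-1/2} \end{pmatrix}, w
\right)
\]
restricted to $\Ga$ is the unipotent one-parameter subgroup of $\psi|_{\SL_2}$, and the conjugation action of $(\sigma_q^m, w)$ scales it by $q^m$ as required by Definition \ref{def:finweilDel}; compatibility with the projection to $\Gal(\ol k/k)$ is inherited from $\psi$. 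The inverse map is the standard Jacobson--Morozov-type reconstruction: given $\varphi$ of Weil--Deligne type, the unipotent $\varphi|_{\Ga}(1)$ together with the torus action coming from $\varphi(\sigma_q)$-conjugation determines an $\mathfrak{sl}_2$-triple in $\wh G$ (using that $\wh G$ has characteristic zero coefficients), which integrates to an $\SL_2$, and one twists by $\lvert w\rvert^{-1/2}$ on the torus to absorb the scaling; this recovers $\psi$. That these two constructions are mutually inverse is a routine check once the $\SL_2$ is pinned down, and the special and Frobenius-semisimple conditions match on the nose because $\varphi|_{\Ga}(1)$ and $\psi|_{\Ga}(1)$ are literally the same element and because $\varphi(\sigma_q)$ differs from $\psi(\sigma_q)$ only by a unipotent factor commuting with the rest, so their semisimple parts generate the same thing modulo the torus.

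Next I would descend to equivalence classes. The key point is that the construction is $\wh G$-equivariant: if $\Ad(g)$ carries $\psi$ to $\psi'$ then it carries the associated $\varphi$ to $\varphi'$, because the formula only involves $\psi$ and the intrinsic absolute value $|w|$. Hence $\Ad(g)(\psi_0) = \psi_0'$ is equivalent to $\Ad(g)(\varphi_0) = \varphi_0'$, using the natural identification $A(\psi_0) \cong A(\varphi_0)$ already recorded in the excerpt (via \cite[Proposition 3.3]{BMIYJMmor}, which says $Z_{\wh G}(\varphi_0)$ is $Z_{\wh G}(\psi_0)$ times a connected unipotent group). To match the second clause in the definition of equivalence — that $\ol{\varphi(\sigma_q)}$ corresponds to $\ol{\varphi'(\sigma_q)}$ in $\wt A(\varphi_0)$ — I would show $\wt Z(\psi_0) \cong \wt Z(\varphi_0)$ and $\wt A(\psi_0) \cong \wt A(\varphi_0)$ naturally, and that under these identifications the distinguished elements $\ol{\psi(\sigma_q)}$ and $\ol{\varphi(\sigma_q)}$ agree. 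This again reduces to the fact that $\varphi(\sigma_q)$ and $\psi(\sigma_q)$ differ by an element of the connected unipotent group $Z_{\wh G}(\varphi_0)/Z_{\wh G}(\psi_0)$, which dies in the passage to the finite quotients $\ol A$ and $\wt A$. Combining, $\psi \sim \psi'$ iff $\varphi \sim \varphi'$, giving bijections $\Psi_\Lambda(G) \cong \Phi_\Lambda(G)$ and, restricting, $\Psi_\Lambda(G)_{\mr{sp}} \cong \Phi_\Lambda(G)_{\mr{sp}}$.

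Finally, for the isomorphism $A_\psi \cong A_\varphi$: by definition $A_\psi = Z_{\ol A(\psi_0)}(\ol{\psi(\sigma_q)})$ and $A_\varphi = Z_{\ol A(\varphi_0)}(\ol{\varphi(\sigma_q)})$, and since we have already produced a natural isomorphism $\ol A(\psi_0) \cong \ol A(\varphi_0)$ carrying $\ol{\psi(\sigma_q)}$ to $\ol{\varphi(\sigma_q)}$ (indeed $\ol A(\psi_0)$ was defined in the excerpt as precisely the quotient of $A(\psi_0)$ corresponding to $\ol A(\varphi_0)$), taking centralizers of corresponding elements yields $A_\psi \cong A_\varphi$ canonically. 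The main obstacle I anticipate is the careful bookkeeping in the middle step — specifically, verifying that the distinguished Frobenius images $\ol{\psi(\sigma_q)}$ and $\ol{\varphi(\sigma_q)}$ really are identified under the natural maps $\wt A(\psi_0) \cong \wt A(\varphi_0)$, since $\psi(\sigma_q)$ and $\varphi(\sigma_q)$ are genuinely different elements of ${}^L G$ and one must argue that their difference lies in the kernel of every relevant quotient; this hinges on \cite[Proposition 3.3]{BMIYJMmor} and the observation that the discrepancy is a unipotent element of the identity component of the centralizer, together with the fact that the twisting torus element $\operatorname{diag}(|w|^{1/2}, |w|^{-1/2})$ centralizes $\psi_0$ and hence does not affect the action on any $\pi_0$.
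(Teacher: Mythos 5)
The overall architecture is right (forward map, descent to equivalence classes, identification of component groups), and your first and last steps are essentially the paper's: the first cites \cite[Theorem 6.16]{BMIYJMmor} together with \cite[Proposition 1.7]{ImaLLCell} for the bijection on $\wh G$-conjugacy classes, and the last is exactly the paper's centralizer argument once the isomorphism $\widetilde A(\psi_0)\cong\widetilde A(\varphi_0)$ is in hand. But your middle step contains a genuine error in the mechanism. You write that $\varphi(\sigma_q)$ and $\psi(\sigma_q)$ ``differ by an element of the connected unipotent group $Z_{\wh G}(\varphi_0)/Z_{\wh G}(\psi_0)$, which dies in the passage to the finite quotients,'' and later that the torus element $\operatorname{diag}(|w|^{1/2},|w|^{-1/2})$ ``centralizes $\psi_0$.'' Neither assertion is correct. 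The discrepancy is the semisimple element $\psi\bigl(\operatorname{diag}(q^{1/2},q^{-1/2}),1\bigr)$; it is not unipotent, it does not lie in $Z_{\wh G}(\varphi_0)$ (it conjugates $\varphi|_{\Ga}(1)$ by the scalar $q$), and it certainly does not centralize $\psi_0$ (no nontrivial torus element of $\SL_2$ centralizes $\psi(\SL_2)$). Indeed, for exactly this reason $\psi(\sigma_q)\notin\widetilde Z(\varphi_0)$ in general, so the two distinguished elements do not live in a common group in which one could compare them directly and argue the difference ``dies.''

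The paper's actual resolution, which your proposal omits, is to construct a nontrivial isomorphism $\widetilde Z(\psi_0)\to\widetilde Z(\varphi_0)$ explicitly by
\[
(g,\sigma_q^m)\ \longmapsto\ \psi_0\!\left(\begin{pmatrix}\lvert\sigma_q\rvert^{m/2}&0\\0&\lvert\sigma_q\rvert^{-m/2}\end{pmatrix},1\right)(g,\sigma_q^m),
\]
and to observe that this is a group homomorphism because $\psi(\operatorname{diag})$ commutes with $Z_{\wh G}(\psi_0)$ --- which is the correct centralizing statement, and a weaker one than centralizing $\psi_0$. The induced map on $\widetilde A$ then tautologically carries $\ol{\psi(\sigma_q)}$ to $\ol{\varphi(\sigma_q)}$ and restricts to the identity on $\ol A(\psi_0)\cong\ol A(\varphi_0)$; the compatibility with $\Ad(g)$ follows because $g\in\wh G$ commutes with nothing special but the twist factor only depends on $\psi_0$. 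Without this twisted map, your bookkeeping step --- the one you yourself flagged as ``the main obstacle'' --- does not go through as written. You should replace the ``discrepancy dies in $\pi_0$'' argument by the explicit isomorphism above and the centralizing observation about $Z_{\wh G}(\psi_0)$.
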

\begin{proof}
Sending $\psi$ to $\varphi$, 
we have a bijection between 
the $\wh{G}$-equivalence classes of 
the Frobenius semisimple L-parameters 
of $\SL_2$-type 
and 
the $\wh{G}$-equivalence classes of 
the Frobenius semisimple Weil--Deligne L-parameters 
by \cite[Theorem 6.16]{BMIYJMmor} and \cite[Proposition 1.7]{ImaLLCell}. 
We have a natural isomorphism 
$\ol{A} (\psi_0) \cong \ol{A} (\varphi_0)$ by the definition of $\ol{A} (\psi_0)$. 
Therefore 
\begin{equation}\label{eq:wtZhom}
\widetilde{Z}(\psi_0) \to 
\widetilde{Z}(\varphi_0) ; (g,\sigma_q^m) \mapsto 
\psi_0 \left( 
\begin{pmatrix}
	\lvert \sigma_q \rvert^{m/2} & 0 \\ 0 & \lvert \sigma_q \rvert^{-m/2} 
\end{pmatrix}, 1 \right)(g,\sigma_q^m)
\end{equation}
induces the isomorphism 
$\widetilde{A} (\psi_0) \cong \widetilde{A} (\varphi_0)$, 
where \eqref{eq:wtZhom} is a group homomorphism because any element of $Z_{\wh{G}}(\psi_0)$ commutes with $\psi \left( 
\begin{pmatrix}
	\lvert \sigma_q \rvert^{m/2} & 0 \\ 0 & \lvert \sigma_q \rvert^{-m/2} 
\end{pmatrix}, 1 \right)$. 
Therefore we obtain the claims. 
\end{proof}

\begin{thm}\label{thm:LCfin}
We fix a Whittaker datum of $G$. 
Then we have a natural map 
	\[
	\mc{L}_G \colon \Irr_{\ol{\mb{Q}}_{\ell}}(G(k)) \to 
	\Phi_{\ol{\mb{Q}}_{\ell}}(G)_{\mr{sp}},  
	\]
and a natural bijection between $\mc{L}_G^{-1}(\varphi)$ and 
$\Irr_{\ol{\mb{Q}}_{\ell}}(A_{\varphi})$ for $\varphi \in \Phi_{\ol{\mb{Q}}_{\ell}}(G)_{\mr{sp}}$. 
\end{thm}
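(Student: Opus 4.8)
The plan is to assemble the map $\mc{L}_G$ and the packet bijection by combining the categorical equivalence of Theorem \ref{thm:RepGcateq}, the parametrization of Theorem \ref{thm:parposdual}, and the translation between $\SL_2$-type and Weil--Deligne type parameters in Proposition \ref{prop:bijSLWD}. First I would start from Theorem \ref{thm:parposdual}, which gives, after fixing the Whittaker datum of $G$, a bijection
\[
\Irr_{\ol{\mb{Q}}_{\ell}}(G(k)) \cong \coprod_C \ol{\mf{M}}\left( \ol{A}_{G^*}(g) \subset \widetilde{A}_{G^*}(g) \right),
\]
where $C$ runs over $\mr{F}^*$-stable special conjugacy classes in $G^*$. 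For each such $C$ I would produce an $\SL_2$-type parameter $\psi_C$: the semisimple part $s$ of a representative $g \in C$ is (up to conjugacy) $\mr{F}^*$-stable, so it determines an $F$-stable $W^{\circ}$-orbit in $\Ch(T)$, hence via \cite[Proposition 3.8]{IVLpfin} and \eqref{eq:chTiso} a parameter $\phi_{\mc{L},0}\colon I_k \to \wh{T}$; the unipotent part $u$, special in $Z_{G^*}(s)^{\circ} = \wh{H}_{\mc{L}}$, gives via the Jacobson--Morozov $\SL_2$ (here $p$ good is used) a homomorphism $\SL_2 \to Z_{G^*}(s)^{\circ}$; and the element $\ol{h_{\beta}\dot w^{\beta}\sigma_q}$ produced in the proof of Theorem \ref{thm:parposdual} furnishes the value on $\sigma_q$, together with the $I_k$-action. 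One checks this is Frobenius semisimple (the second component is semisimple by construction) and special, so $\psi_C \in \Psi_{\ol{\mb{Q}}_{\ell}}(G)_{\mr{sp}}$; applying the bijection of Proposition \ref{prop:bijSLWD} gives $\varphi_C \in \Phi_{\ol{\mb{Q}}_{\ell}}(G)_{\mr{sp}}$. The map $\mc{L}_G$ sends every irreducible representation lying over the $C$-summand to $\varphi_C$.

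Next I would verify that this is well-defined and natural, i.e.\ independent of the choices (representative $\mc{L}$ of $\mf{o}$, the element $h_{\beta}$, the section $\wh{\mf{n}}$, the embedding $W(\ol{k})\hookrightarrow\ol{\mb{Q}}_{\ell}$). Remark \ref{rem:choiceh} handles the dependence on $h_{\beta}$ (different choices give translated, hence equivalent, data), Lemma \ref{lem:Aiso} and Lemma \ref{lem:edlift} guarantee the lift $\wh{\mf{n}}$ exists and that $\ol{A}_{\mc{H}^{\circ}}(u_{\mbf{c}}) \cong \ol{A}_{\ms{H}^{\circ}}(u)$ is canonical, and the equivalence class defining $\Phi_{\ol{\mb{Q}}_{\ell}}(G)$ absorbs the remaining $\wh{G}$-conjugacy ambiguity in $\phi_{\mc{L},0}$ and $\ol{h_{\beta}\dot w^{\beta}\sigma_q}$. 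One also needs that the construction is compatible with the decomposition \eqref{en:decssp} into Lusztig series and with the passage through the disconnected group, which is exactly where the functoriality of Proposition \ref{prop:unipcatdec} (emphasized in the introduction) is invoked.

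For the packet bijection, fix $\varphi \in \Phi_{\ol{\mb{Q}}_{\ell}}(G)_{\mr{sp}}$, say $\varphi = \varphi_C$ with underlying $(s,u)$ and Frobenius element written as $g\,\ol{h_{\beta}\dot w^{\beta}\sigma_q}$ after an appropriate normalization. On one hand, Proposition \ref{prop:bijSLWD} gives $A_{\varphi}\cong A_{\psi} = Z_{\ol{A}(\psi_0)}(\ol{\psi(\sigma_q)})$, and unwinding definitions identifies this with $Z_{\ol{A}_{\mc{H}}(u_{\mbf{c}})}(g\,\ol{h_{\beta}\dot w^{\beta}\sigma_q})$. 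On the other hand, by \eqref{eq:ZOmega} in the proof of Theorem \ref{thm:parposdual} this equals $Z_{\ol{A}_{\ms{H}}(u)}(gF_{\beta}^*)$, and the fibre of the Theorem \ref{thm:parposdual} bijection over the class $[g,\rho]$ in $\ol{\mf{M}}(\ol{A}_{G^*}(g)\subset\widetilde{A}_{G^*}(g))$ is precisely indexed by $\Irr_{\ol{\mb{Q}}_{\ell}}(Z_{\ol{A}_{G^*}(g)}(g\,\mr{F}^*)) = \Irr_{\ol{\mb{Q}}_{\ell}}(Z_{\ol{A}_{\ms{H}}(u)}(gF_{\beta}^*))$. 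Combining these identifications, and using Lemma \ref{lem:ZquotOmega} to match the $\Omega_{\mbf{c},\beta}$-equivariance on the categorical side with the disconnected centralizer on the dual side, yields the desired bijection $\mc{L}_G^{-1}(\varphi) \cong \Irr_{\ol{\mb{Q}}_{\ell}}(A_{\varphi})$.

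The main obstacle I expect is the bookkeeping that ties together the three different incarnations of the relevant finite group: $\Omega_{\mbf{c},\beta}$-equivariant sheaf data on $\mc{G}_{\mbf{c}}$ (Theorem \ref{thm:RepGcateq}), the $\ol{\mf{M}}$-datum for $\ol{A}_{G^*}(g)\subset\widetilde{A}_{G^*}(g)$ (Theorem \ref{thm:parposdual}), and the centralizer $A_{\varphi}$ of the Frobenius in Lusztig's canonical quotient. Each is pinned down only up to a conjugacy/translation ambiguity, and the content of the theorem is that the Whittaker normalization makes all of them match on the nose; verifying this compatibility — especially that $\ol{\psi(\sigma_q)}$ really corresponds to $\ol{h_{\beta}\dot w^{\beta}\sigma_q}$ under all the identifications above, and that this is independent of the auxiliary $\SL_2$-type detour — is the delicate step, whereas constructing the map itself is essentially a matter of reading off Theorems \ref{thm:RepGcateq}, \ref{thm:parposdual} and Proposition \ref{prop:bijSLWD}.
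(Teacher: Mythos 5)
There is a genuine gap in the construction of the map. You define $\psi_C$ so that $\psi_C(\sigma_q)=\ol{h_{\beta}\dot w^{\beta}\sigma_q}$, i.e.\ a parameter depending only on the special class $C$, and then declare ``the map $\mc{L}_G$ sends every irreducible representation lying over the $C$-summand to $\varphi_C$''. That cannot be right: Theorem~\ref{thm:parposdual} parametrizes the $C$-summand by pairs $(x,\rho)\in \ol{\mf{M}}\bigl(\ol{A}_{G^*}(g)\subset\widetilde{A}_{G^*}(g)\bigr)$, and the twisted-conjugacy class of the first coordinate $x$ must be carried by the Frobenius value of the parameter, not absorbed into the packet. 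In the paper's construction this is exactly the role of the element $\wt{g}$: one sets
\[
\psi(\sigma_q)=\wt{g}\,h_{\beta}\dot w^{\beta}\rtimes\sigma_q,
\]
where $\wt{g}\in Z_{\mc{H}^{\circ}}(u_{\mbf{c}})$ lifts the element $g\in\mc{G}_{\mbf{c}}$ read off from the support of the sheaf $\mc{F}_{\pi}$ supplied by Theorem~\ref{thm:RepGcateq}. Two representations over the same $C$-summand with distinct $x$-coordinates give inequivalent $\psi$ (their $\ol{\psi(\sigma_q)}\in\widetilde A(\psi_0)$ differ even modulo $\ol{A}(\psi_0)$-conjugacy), hence distinct $\varphi\in\Phi_{\ol{\mb{Q}}_{\ell}}(G)_{\mr{sp}}$. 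If instead the whole $C$-summand collapses to one $\varphi_C$, then $\mc{L}_G^{-1}(\varphi_C)\cong\ol{\mf{M}}\bigl(\ol{A}_{G^*}(g)\subset\widetilde{A}_{G^*}(g)\bigr)$, which is not $\Irr_{\ol{\mb{Q}}_{\ell}}(A_{\varphi_C})$; the theorem's packet statement fails.

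Your third paragraph in effect contradicts the first: there you write the Frobenius element as $g\,\ol{h_{\beta}\dot w^{\beta}\sigma_q}$ and compute centralizers in $\ol{A}_{\ms{H}}(u)$ of $gF_{\beta}^*$, which is the correct picture and matches Lemma~\ref{lem:ZquotOmega} and \eqref{eq:ZOmega}. But you cannot have both: if $g$ enters the centralizer computation it must also enter the definition of the parameter. Repair the first paragraph by building $\psi$ from the full datum $(C,x)$ rather than from $C$ alone (equivalently, build it directly from the sheaf-theoretic datum $(\mf{o},\mf{c},\beta,g)$ extracted from $\pi$ via Theorem~\ref{thm:RepGcateq}, which is what the paper does, bypassing Theorem~\ref{thm:parposdual}); then the packet bijection paragraph, which is otherwise in the right spirit, goes through. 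Aside from this, the route through Theorem~\ref{thm:parposdual} rather than directly through Theorem~\ref{thm:RepGcateq} is a legitimate variation, since Theorem~\ref{thm:parposdual} is itself an upstream consequence of Theorem~\ref{thm:RepGcateq}, and the remaining ingredients you cite (Proposition~\ref{prop:bijSLWD}, Lemma~\ref{lem:ZquotOmega}, Remark~\ref{rem:choiceh}, Lemmas~\ref{lem:edlift} and~\ref{lem:Aiso}) are the same ones the paper uses.
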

\begin{proof}
We construct a map 
	\begin{equation}
	\mc{L}_G^{\Psi} \colon \Irr_{\ol{\mb{Q}}_{\ell}}(G(k))  \to 
		\Psi_{\ol{\mb{Q}}_{\ell}}(G)_{\mr{sp}}. 
	\end{equation}
Let $\pi \in \Irr_{\ol{\mb{Q}}_{\ell}}(G(k))$. 
Let $\mc{F}_{\pi}$ be an irreducible object of 
$\Sh^{\mc{G}_{\mbf{c}},\tau_{\beta}}(\mc{G}_{\mbf{c}})^{\Omega_{\mbf{c},\beta}}$ corresponding to $\pi$ under 
Theorem \ref{thm:RepGcateq} 
for some $\mf{o}$, $\mf{c}$ and 
$\beta \in \mf{B}_{\mbf{c},\Omega_{\mbf{c}}}^{\circ}$. 
This gives an element $g \in \mc{G}_{\mbf{c}}$ and a representaion of $Z_{\mc{G}_{\mbf{c}}}(g\sigma_{\beta})$ with $\Omega_{\mbf{c},\beta}$-equivariant structure. 

Let $\mc{L}$ be a representative of $\mf{o}$. 
Recall that $\mc{H}=Z_{\wh{G}}(\phi_{\mc{L},0})$. 
Let $u_{\mbf{c}} \in \mc{H}^{\circ}$ an element of the unipotent conjugacy classs corresponding to $\mbf{c}$. 
We have an identification 
$\ol{A}_{\mc{H}^{\circ}}(u_{\mbf{c}}) \cong \mc{G}_{\mbf{c}}$. 
Let $\wt{g} \in Z_{\mc{H}^{\circ}}(u_{\mbf{c}})$ be a lift of $g$. 

We define $\psi$ by 
\[	 
 \psi|_{I_k}=\phi_{\mc{L},0}, \quad 
\psi \begin{pmatrix}
	1 & 1 \\ 0 & 1 
\end{pmatrix}=u_{\mbf{c}}, \quad  
\psi (\sigma_q) = \wt{g} h_{\beta} \dot{w}^{\beta} \rtimes \sigma_q. 
\]
	We note that 
\begin{align*}
	\Ad(\psi (\sigma_q))(\phi_{\mc{L},0})=\phi_{\mc{L},0}^q. 
\end{align*}
We also have 
$\Ad (\psi (\sigma_q))(u_{\mbf{c}})=u_{\mbf{c}}$. 
We put $\mc{L}_G^{\Psi} (\pi)=\psi$. 

We have natural bijections 
\begin{equation}\label{eq:Apicent}
	A_{\psi}\cong Z_{\ol{A}_{\mc{H}}(u_{\mbf{c}})}(\ol{\psi(\sigma_q)}) \cong Z_{\ol{A}_{\mc{H}}(u_{\mbf{c}})}(g \ol{h_{\beta} \dot{w}^{\beta} \sigma_q}).
\end{equation}
By \eqref{eq:Apicent} and Lemma \ref{lem:ZquotOmega}, 
we have 
\begin{equation}\label{eq:ApsiZOmega}
\Rep (A_{\psi}) \cong \Rep (Z_{\mc{G}_{\mbf{c}}}(g \tau_{\beta}))^{\Omega_{\mbf{c},\beta}} . 
\end{equation}
By the construction and \eqref{eq:ApsiZOmega}, the fiber $(\mc{L}_G^{\Psi})^{-1}(\psi)$ 
is parametrized by 
the isomorphism classes of the irreducible representations of $A_{\psi}$. 

We define $\mc{L}_G$ 
as the composite of $\mc{L}_G^{\Psi}$ 
and 
the natural bijection 
$\Psi_{\ol{\mb{Q}}_{\ell}}(G)_{\mr{sp}} \cong 
\Phi_{\ol{\mb{Q}}_{\ell}}(G)_{\mr{sp}}$ in Proposition \ref{prop:bijSLWD}. 
\end{proof}

\section{Relation with the categorical local Langlands}\label{sec:RelcatLL}

We discuss relation with the categorical local Langlands formulated in 
\cite{BCHNCohSpr}, 
\cite{FaScGeomLLC}, \cite{HelderIH}, \cite{ZhuCohLp}. 
Actually, we need only the tame part of the categorical local Langlands, which is constructed in \cite{ZhuTamecatLLC}. 
%In this section, we assume Conjecture \ref{conj:FLC}. 

Let $F$ be a non-archimedean local field with the residue field $k$. 
Let $\mc{G}$ be a reductive group scheme over $\mc{O}_F$ 
whose special fiber is $G$. 
Let $\mbf{G}$ be the generic fiber of $\mc{G}$. 
We note that $\wh{G}$ is the dual group of $\mbf{G}$. 
Let $I_F$ and $P_F$ be the inertia subgroup and wild inertia subgroup of the Weil group $W_F$ of $F$. 

Let $\mc{B}$ be a Borel subgroup of $\mc{G}$. 
Let $\mbf{B} \subset \mbf{G}$ and $B \subset G$ be the Borel subgroups corresponding to $\mc{B}$. 
Let $\psi \colon R_{\mr{u}}(\mbf{B})(F) \to \ol{\mb{Q}}_{\ell}^{\times}$ be a generic character such that 
the restriction of $\psi$ to $R_{\mr{u}}(\mc{B})(\mc{O}_F)$ induces a generic character $\ol{\psi}$ of $R_{\mr{u}}(B)(k)$. 

Let $\pi$ be an irreducible representation of $G(k)$ over $\ol{\mb{Q}}_{\ell}$. 
Let $[\varphi]$ be the equivalence class of L-parameters corresponding to $\pi$. 
Let $\widetilde{\sigma}_q \in W_F$ be a lift of $\sigma_q$. 
Then we have   
\begin{equation}\label{eq:WPWkisom}
W_F/P_F \cong (I_F/P_F) \rtimes \mb{Z} \cong W_k, 
\end{equation}
where the first isomorphism is given by $\widetilde{\sigma}_q$. 
Let 
\begin{equation*}
p_{\widetilde{\sigma}_q} \colon \mathit{WD}_F \to \mathit{WD}_k
\end{equation*}
be the natural morphism given by \eqref{eq:WPWkisom}. 

We say that a tame L-parameter $\widetilde{\varphi}$ for $\mbf{G}$ is a lift of $[\varphi]$ if 
$\widetilde{\varphi}$ is given by 
a representative of $[\varphi]$ and $p_{\widetilde{\sigma}_q} \colon \mathit{WD}_F \to \mathit{WD}_k$ for some lift $\widetilde{\sigma}_q$ of $\sigma_q$. 

Let $Z^1(W_F,\wh{G})$ be the moduli of L-parameters over $\ol{\mb{Q}}_{\ell}$ for $\mbf{G}$. 
Let $C_{[\varphi]}$ be the connected components which contain a lift of $[\varphi]$. 
Let 
\begin{equation*}
u_G \colon Z^1(W_F,\wh{G}) \to 
\mc{N}_{\wh{G}}
\end{equation*} 
be the unipotent monodromy morphism in \cite[VIII.2.1]{FaScGeomLLC}, 
where $\mc{N}_{\wh{G}}$ is the nilpotent cone of $\Lie (\wh{G})$.  
Let $N_{[\varphi]}$ be the image of the set of the lifts of $[\varphi]$. 
We write $\ol{N}_{[\varphi]}$ for the closure of $N_{[\varphi]}$ in $\mc{N}_{\wh{G}}$. 
We define $X_{\leq [\varphi]}$ as 
$C_{[\varphi]} \cap u_G^{-1} (\ol{N}_{[\varphi]})$.

Let $\mc{S}_{\pi}$ be the coherent sheaf on the moduli of L-parameters given by 
$\cInd_{\mc{G}(\mc{O}_F)}^{\mbf{G}(F)} \pi$ 
under the tame categorical local Langlands correspondence.

Let $\widetilde{\varphi}$ be a lift of $[\varphi]$ with respect to $\widetilde{\sigma}_q$. 
We define 
\begin{equation*}
h_{\widetilde{\varphi}} \colon \wh{G} \times Z_{\wh{G}} (\varphi_0) \to 
Z^1(W_F,\wh{G})
\end{equation*}
by 
$h_{\widetilde{\varphi}}(g,g')|_{I_F} =\Ad (g) (\varphi_0)$ 
and 
$h_{\widetilde{\varphi}}(g,g')(\widetilde{\sigma}_q)=\Ad (g)(\widetilde{\varphi}(\widetilde{\sigma}_q) g')$. 
Let $X_{[\varphi]}$ be the image of 
$h_{\widetilde{\varphi}}$. 
We note that $X_{[\varphi]}$ is independent of the choice of the lift $\widetilde{\varphi}$. 
Then $h_{\widetilde{\varphi}}$ is a 
$Z_{\wh{G}} (\varphi_0)$-torsor over 
$X_{[\varphi]}$ as in \cite[\S 2.3]{DHKMModLp}, 
where the action of $Z_{\wh{G}} (\varphi_0)$ on 
$\wh{G} \times Z_{\wh{G}} (\varphi_0)$ is given by 
\begin{equation*}
 g'' \cdot (g,g')=(gg''^{-1} , \widetilde{\varphi}(\widetilde{\sigma}_q)^{-1} g'' \widetilde{\varphi}(\widetilde{\sigma}_q) g' g''^{-1}). 
\end{equation*}

Let $\rho_{\pi}$ be the representation of $A_{\varphi}$ corresponding to $\pi$ by Theorem \ref{thm:LCfin} with respect to $\ol{\psi}$. 
We define $\widetilde{\rho}_{\pi}$ as the inflation of 
$\Ind_{A_{\varphi}}^{\ol{A}(\varphi_0)} \rho_{\pi}$ 
under $Z_{\wh{G}} (\varphi_0) \to \ol{A}(\varphi_0)$. 
Let $V(\widetilde{\rho}_{\pi})$ be the vector bundle on  $X_{[\varphi]}$ given by $\widetilde{\rho}_{\pi}$ and the $Z_{\wh{G}} (\varphi_0)$-torsor $h_{\widetilde{\varphi}}$. 
%Since the projection $\wh{G} \times Z_{\wh{G}} (\varphi_0) \to \wh{G}$ is $Z_{\wh{G}} (\varphi_0)$-equivariant, $\widetilde{\rho}_{\pi}$ gives a $Z_{\wh{G}} (\varphi_0)$-equivariant vector bundle on $\wh{G} \times Z_{\wh{G}} (\varphi_0)$. 
%Further this descends to a vector bundle $V(\widetilde{\rho}_{\pi})$ on $X_{(\widetilde{\varphi})}$. 

\begin{conj}\label{conj:catfin}
The support of $\mc{S}_{\pi}$ is contained in $X_{\leq [\varphi]}$. 
The restriction of $\mc{S}_{\pi}$ to $X_{[\varphi]}$ is isomorphic to 
$V(\widetilde{\rho}_{\pi})$. 
\end{conj}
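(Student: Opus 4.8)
The plan is to verify Conjecture \ref{conj:catfin} by unwinding the definition of $\mc{S}_{\pi}$ through the tame categorical local Langlands of \cite{ZhuTamecatLLC}, matching it with the explicit description of $\pi$ furnished by Theorem \ref{thm:RepGcateq} and Theorem \ref{thm:LCfin}. Concretely, since $\cInd_{\mc{G}(\mc{O}_F)}^{\mbf{G}(F)}\pi$ is a compactly induced representation from a depth-zero type, its image under the spectral action should be a coherent sheaf supported on the locus of tame parameters whose restriction to $I_F/P_F$ matches $\varphi_0$ (this is where the semisimple parameter $\mf{o}$, i.e.\ $\phi_{\mc{L},0}$, enters) and whose monodromy lies in the closure of the unipotent orbit attached to $\mbf{c}$ (this is the condition cutting out $\ol{N}_{[\varphi]}$). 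This gives the support statement: $\mathrm{supp}(\mc{S}_{\pi}) \subseteq X_{\leq[\varphi]}$.

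For the second assertion I would restrict attention to the open stratum $X_{[\varphi]}$, which by construction is the image of the $Z_{\wh{G}}(\varphi_0)$-torsor $h_{\widetilde{\varphi}}\colon \wh{G}\times Z_{\wh{G}}(\varphi_0) \to X_{[\varphi]}$. Over this stratum the coherent sheaf $\mc{S}_{\pi}$ is $\wh{G}$-equivariant, so it is determined by a representation of the stabilizer of a point, which is (an extension of) $Z_{\wh{G}}(\varphi_0)$; hence $\mc{S}_{\pi}|_{X_{[\varphi]}}$ is the vector bundle associated to the torsor $h_{\widetilde{\varphi}}$ and some representation $V$ of $Z_{\wh{G}}(\varphi_0)$. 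The task is then to identify $V$ with $\widetilde{\rho}_{\pi}$, i.e.\ with the inflation of $\Ind_{A_{\varphi}}^{\ol{A}(\varphi_0)}\rho_{\pi}$. The key computation is that the fiber of $\mc{S}_{\pi}$ at the point corresponding to $\widetilde{\varphi}$ itself is, by the defining compatibility of the tame categorical correspondence with parabolic induction and with the Bernstein-center/Hecke action, the space $\Hom_{G(k)}$-type multiplicity space governing $\pi$ inside the relevant principal series block — and by Theorem \ref{thm:RepGcateq} and the proof of Theorem \ref{thm:LCfin} (see \eqref{eq:Apicent}, \eqref{eq:ApsiZOmega}) this multiplicity space is exactly $\Ind_{A_{\varphi}}^{\ol{A}(\varphi_0)}\rho_{\pi}$ as a $\ol{A}(\varphi_0)$-representation, where the induction from $A_{\varphi}=Z_{\ol{A}(\varphi_0)}(\ol{\varphi(\sigma_q)})$ reflects the choice of the Frobenius point $g=\wt{g}h_{\beta}\dot{w}^{\beta}$ in the torsor and the $\Omega_{\mbf{c},\beta}$-equivariance is absorbed into the passage from $\mc{G}_{\mbf{c}}$ to $\ol{A}_{\mc{H}}(u_{\mbf{c}})$.

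After that the two vector bundles $\mc{S}_{\pi}|_{X_{[\varphi]}}$ and $V(\widetilde{\rho}_{\pi})$ agree fiberwise compatibly with the $Z_{\wh{G}}(\varphi_0)$-action, hence are isomorphic as associated bundles of the torsor $h_{\widetilde{\varphi}}$; one should also check the isomorphism is canonical, which follows once the Whittaker normalization $\ol{\psi}$ on the automorphic side is matched with the Whittaker normalization used to rigidify $\mc{S}_{\pi}$ on the spectral side (this is the role of the compatible choice of $\psi$, $\ol{\psi}$ in the setup of this section).

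\textbf{Main obstacle.} The hard part will be the precise bookkeeping of normalizations: reconciling the Whittaker datum and the $q^{1/2}$-twist (used to pass between $\SL_2$-type and Weil--Deligne parameters in Proposition \ref{prop:bijSLWD}) on the representation-theoretic side with the rigidification and the unipotent monodromy normalization built into the tame categorical local Langlands of \cite{ZhuTamecatLLC}, so that the identification $V\cong\widetilde{\rho}_{\pi}$ is the canonical one and not merely an abstract isomorphism. A secondary difficulty is controlling $\mc{S}_{\pi}$ \emph{scheme-theoretically} near the boundary of $X_{\leq[\varphi]}$ — the conjecture as stated only claims containment of the support and an isomorphism over the open stratum, so I would deliberately not attempt to describe $\mc{S}_{\pi}$ on all of $X_{\leq[\varphi]}$, which appears genuinely subtle.
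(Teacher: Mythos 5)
The statement you are trying to prove is Conjecture~\ref{conj:catfin}: the paper states it explicitly as a conjecture and does \emph{not} supply a proof, so there is nothing in the paper for your argument to be measured against. The paper only offers plausibility evidence (the remarks following the conjecture, citing \cite{HelderIH} and \cite{ZhuCohLp} for special cases like $\GL_2$, $\PGL_2$ unipotent principal series). Your proposal should therefore be read not as a gap-free proof but as a strategy for attacking an open problem.

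As a strategy, it is a reasonable sketch of the expected mechanism, but it is not a proof and you are candid about this yourself. The two load-bearing steps are asserted rather than established: (i) that the tame categorical correspondence of \cite{ZhuTamecatLLC} sends $\cInd_{\mc{G}(\mc{O}_F)}^{\mbf{G}(F)}\pi$ to a sheaf supported on $C_{[\varphi]}\cap u_G^{-1}(\ol{N}_{[\varphi]})$ and (ii) that the fiber of $\mc{S}_{\pi}$ at $\widetilde{\varphi}$, as a $Z_{\wh{G}}(\varphi_0)$-representation, is $\widetilde{\rho}_{\pi}$. Neither follows formally from the equivalences in Theorem~\ref{thm:RepGcateq} or Theorem~\ref{thm:LCfin}, which live entirely on the finite-field side; one needs a genuine compatibility between Lusztig--Yun/Lusztig's categorical center description of $\Rep(G(k))$ and the spectral side of \cite{ZhuTamecatLLC}, and that compatibility is precisely what the conjecture is asserting. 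Your identified \textbf{main obstacle} (Whittaker and $q^{1/2}$ normalizations, and matching the rigidifications on both sides) is exactly where the content lies, so flagging it does not discharge it. In short: the approach is sensible and consistent with the paper's framing, but what you have written is a proof \emph{outline} for an open conjecture, not a proof, and the paper itself does not claim more.
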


\begin{rem}
If $\varphi|_{\Ga}$ is trivial, $X_{\leq [\varphi]}=X_{[\varphi]}$. %(\cf \cite[Theorem 5.16]{BMIYJMmor}). 
Hence Conjecture \ref{conj:catfin} gives a full description of $\mc{S}_{\pi}$ in this case. 
\end{rem}

\begin{rem}
When $\pi$ is principal representation, $S_{\pi}$ should appear
as a direct summand of the Springer coherent sheaf studied in
\cite{BCHNCohSpr}, \cite{HelderIH} and \cite{ZhuCohLp}.
If $G$ is $\GL_2$ or $\PGL_2$ and $\pi$ is a unipotent principal representation such a decomposition is known by
\cite[Proposition 4.27]{HelderIH} and \cite[Example 4.4.4]{ZhuCohLp}.
In general, $\mc{S}_{\pi}$ is not locally free on $X_{\leq [\varphi]}$ by \cite[Remark 4.28]{HelderIH}.
\end{rem}

%\bibliographystyle{test2}
%\bibliography{reference}

\begin{thebibliography}{BZCHN24}
	\providecommand{\url}[1]{\texttt{#1}}
	\providecommand{\urlprefix}{URL }
	\providecommand{\eprint}[2][]{\url{#2}}
	
	\bibitem[Aub25]{AubMaccor}
	A.-M. Aubert, On the Macdonald correspondence, 2025, arXiv:2501.02332.
	
	\bibitem[BFO09]{BFOTenCellWIII}
	R.~Bezrukavnikov, M.~Finkelberg and V.~Ostrik, On tensor categories attached to
	cells in affine {W}eyl groups. {III}, Israel J. Math. 170 (2009), 207--234.
	
	\bibitem[BFO12]{BFOChDmod}
	R.~Bezrukavnikov, M.~Finkelberg and V.~Ostrik, Character {$D$}-modules via
	{D}rinfeld center of {H}arish-{C}handra bimodules, Invent. Math. 188 (2012),
	no.~3, 589--620.
	
	\bibitem[BH02]{BuHeOnderunip}
	C.~J. Bushnell and G.~Henniart, On the derived subgroups of certain unipotent
	subgroups of reductive groups over infinite fields, Transform. Groups 7
	(2002), no.~3, 211--230.
	
	\bibitem[BH06]{BHLLCGL2}
	C.~J. Bushnell and G.~Henniart, The local {L}anglands conjecture for {$\rm
		GL(2)$}, vol. 335 of Grundlehren der Mathematischen Wissenschaften,
	Springer-Verlag, Berlin, 2006.
	
	\bibitem[BMIY24]{BMIYJMmor}
	A.~Bertoloni~Meli, N.~Imai and A.~Youcis, The {J}acobson--{M}orozov {M}orphism
	for {L}anglands {P}arameters in the {R}elative {S}etting, Int. Math. Res.
	Not. IMRN  (2024), no.~6, 5100--5165.
	
	\bibitem[BO04]{BeOsTenCellWII}
	R.~Bezrukavnikov and V.~Ostrik, On tensor categories attached to cells in
	affine {W}eyl groups. {II}, in Representation theory of algebraic groups and
	quantum groups, vol.~40 of Adv. Stud. Pure Math., pp. 101--119, Math. Soc.
	Japan, Tokyo, 2004.
	
	\bibitem[BV12]{BrViQuanHopf}
	A.~Brugui\`eres and A.~Virelizier, Quantum double of {H}opf monads and
	categorical centers, Trans. Amer. Math. Soc. 364 (2012), no.~3, 1225--1279.
	
	\bibitem[BZCHN24]{BCHNCohSpr}
	D.~Ben-Zvi, H.~Chen, D.~Helm and D.~Nadler, Coherent {S}pringer theory and the
	categorical {D}eligne-{L}anglands correspondence, Invent. Math. 235 (2024),
	no.~2, 255--344.
	
	\bibitem[Col25]{ColRedffSL}
	E.~Collacciani, A Reduction over finite fields of the tame local Langlands
	correspondence for $\mathit{SL}_n$, 2025, arXiv:2501.09085.
	
	\bibitem[DHKM25]{DHKMModLp}
	J.-F. Dat, D.~Helm, R.~Kurinczuk and G.~Moss, Moduli of {L}anglands parameters,
	J. Eur. Math. Soc. (JEMS) 27 (2025), no.~5, 1827--1927.
	
	\bibitem[DM90]{DiMiLuspar}
	F.~Digne and J.~Michel, On {L}usztig's parametrization of characters of finite
	groups of {L}ie type, Ast\'{e}risque  (1990), no. 181-182, 6, 113--156.
	
	\bibitem[DM20]{DiMiRepLie2nd}
	F.~Digne and J.~Michel, Representations of finite groups of {L}ie type, vol.~95
	of London Mathematical Society Student Texts, Cambridge University Press,
	Cambridge, second edn., 2020.
	
	\bibitem[DR10]{DeRegencusp}
	S.~DeBacker and M.~Reeder, On some generic very cuspidal representations,
	Compos. Math. 146 (2010), no.~4, 1029--1055.
	
	\bibitem[EGNO15]{EGNOTenCat}
	P.~Etingof, S.~Gelaki, D.~Nikshych and V.~Ostrik, Tensor categories, vol. 205
	of Mathematical Surveys and Monographs, American Mathematical Society,
	Providence, RI, 2015.
	
	\bibitem[FS21]{FaScGeomLLC}
	L.~Fargues and P.~Scholze, Geometrization of the local {L}anglands
	correspondence, 2021, arXiv:2102.13459.
	
	\bibitem[Hel23]{HelderIH}
	E.~Hellmann, On the derived category of the {I}wahori-{H}ecke algebra, Compos.
	Math. 159 (2023), no.~5, 1042--1110.
	
	\bibitem[How74]{HowdegSt}
	R.~B. Howlett, On the degrees of {S}teinberg characters of {C}hevalley groups,
	Math. Z. 135 (1973/74), 125--135.
	
	\bibitem[Ima24]{ImaLLCell}
	N.~Imai, Local {L}anglands correspondences in $\ell$-adic coefficients,
	Manuscripta Math. 175 (2024), no. 1-2, 345--364.
	
	\bibitem[IV25]{IVLpfin}
	N.~Imai and D.~A. Vogan, Jr., Langlands parameters for reductive groups over
	finite fields, 2025, arXiv:2506.06961.
	
	\bibitem[Kal22]{KalLLCdis}
	T.~Kaletha, On the local Langlands conjectures for disconnected groups, 2022,
	arXiv:2210.02519.
	
	\bibitem[KL01]{KeLyNonsemiTQFT}
	T.~Kerler and V.~V. Lyubashenko, Non-semisimple topological quantum field
	theories for 3-manifolds with corners, vol. 1765 of Lecture Notes in
	Mathematics, Springer-Verlag, Berlin, 2001.
	
	\bibitem[Lus84]{LusChred}
	G.~Lusztig, Characters of reductive groups over a finite field, vol. 107 of
	Annals of Mathematics Studies, Princeton University Press, Princeton, NJ,
	1984.
	
	\bibitem[Lus87]{LusLeadHec}
	G.~Lusztig, Leading coefficients of character values of {H}ecke algebras, in
	The {A}rcata {C}onference on {R}epresentations of {F}inite {G}roups
	({A}rcata, {C}alif., 1986), vol. 47, Part 2 of Proc. Sympos. Pure Math., pp.
	235--262, Amer. Math. Soc., Providence, RI, 1987.
	
	\bibitem[Lus88]{LusRepdis}
	G.~Lusztig, On the representations of reductive groups with disconnected
	centre, 168, pp. 10, 157--166, 1988, orbites unipotentes et
	repr\'{e}sentations, I.
	
	\bibitem[Lus08]{LusIrrSpin}
	G.~Lusztig, Irreducible representations of finite spin groups, Represent.
	Theory 12 (2008), 1--36.
	
	\bibitem[Lus14]{LusFamSpr}
	G.~Lusztig, Families and {S}pringer's correspondence, Pacific J. Math. 267
	(2014), no.~2, 431--450.
	
	\bibitem[Lus15]{LusUnicatcent}
	G.~Lusztig, Unipotent representations as a categorical centre, Represent.
	Theory 19 (2015), 211--235.
	
	\bibitem[LY20]{LuYuEndHecch}
	G.~Lusztig and Z.~Yun, Endoscopy for {H}ecke categories, character sheaves and
	representations, Forum Math. Pi 8 (2020), e12, 93.
	
	\bibitem[LY21]{LuYuEndHecchv3}
	G.~Lusztig and Z.~Yun, Endoscopy for {H}ecke categories, character sheaves and
	representations, 2021, arXiv:1904.01176v3.
	
	\bibitem[Mac80]{MacZetafingen}
	I.~G. Macdonald, Zeta functions attached to finite general linear groups, Math.
	Ann. 249 (1980), no.~1, 1--15.
	
	\bibitem[McN08]{McNCentnilp}
	G.~J. McNinch, The centralizer of a nilpotent section, Nagoya Math. J. 190
	(2008), 129--181.
	
	\bibitem[Ost14]{OstTenExcCellW}
	V.~Ostrik, Tensor categories attached to exceptional cells in {W}eyl groups,
	Int. Math. Res. Not. IMRN  (2014), no.~16, 4521--4533.
	
	\bibitem[SGA3-3]{SGA3-3}
	Sch\'emas en groupes. {III}: {S}tructure des sch\'emas en groupes r\'eductifs,
	S\'eminaire de G\'eom\'etrie Alg\'ebrique du Bois Marie 1962/64 (SGA 3).
	Dirig\'e par M. Demazure et A. Grothendieck. Lecture Notes in Mathematics,
	Vol. 153, Springer-Verlag, Berlin-New York, 1970.
	
	\bibitem[Sol25]{SolEnddisfin}
	M.~Solleveld, Endoscopy for representations of disconnected reductive groups
	over finite fields, 2025, arXiv:2507.16304.
	
	\bibitem[Yun14]{YunRigAutLoc}
	Z.~Yun, Rigidity in automorphic representations and local systems, in Current
	developments in mathematics 2013, pp. 73--168, Int. Press, Somerville, MA,
	2014.
	
	\bibitem[Zhu20]{ZhuCohLp}
	X.~Zhu, Coherent sheaves on the stack of Langlands parameters, 2020,
	arXiv:2008.02998.
	
	\bibitem[Zhu25]{ZhuTamecatLLC}
	X.~Zhu, Tame categorical local Langlands correspondence, 2025,
	arXiv:2504.07482.
	
\end{thebibliography}

\noindent
Naoki Imai\\
Graduate School of Mathematical Sciences, The University of Tokyo, 
3-8-1 Komaba, Meguro-ku, Tokyo, 153-8914, Japan \\
naoki@ms.u-tokyo.ac.jp\\

\end{document}